\documentclass{amsart}

\pdfminorversion=4

\usepackage{amsmath,amsfonts,amssymb,amscd,verbatim,multicol}
\usepackage[arrow,matrix,cmtip,curve]{xy}
\usepackage{lscape}
\usepackage{fullpage}
 
\usepackage{stmaryrd}
\usepackage{mathrsfs}
%\usepackage[inline]{showlabels}

%self-defined commands
\newtheorem{thm}{Theorem}[section]
\newtheorem{cor}[thm]{Corollary}

\newtheorem{prop}[thm]{Proposition}

\newtheorem{defn}[thm]{Definition}
\newtheorem{ex}[thm]{Example}
\newtheorem{rmk}[thm]{Remark}

\newcommand{\cnt}{\mathcal Z}
\newcommand{\kk}{\Bbbk}
\newcommand{\inv}{^{-1}}
\newcommand{\iso}{\cong}
\newcommand{\niso}{\ncong}
\newcommand{\ep}{\varepsilon}

\newcommand{\wa}{A_1^q(\kk)}
\newcommand{\fwa}{A_1(\kk)}
\newcommand{\qp}{\mathcal{O}_q(\kk^2)}
\newcommand{\jp}{\mathcal{J}}

\newcommand{\qm}{\mathcal{O}_q(M_2(\kk))}

\newcommand{\mq}{M_q(2)}
\newcommand{\mj}{M_J(2)}
\newcommand{\jmn}{\mathcal{O}_J(M_2(\kk))}

\DeclareMathOperator{\Aut}{Aut}
\DeclareMathOperator{\id}{id}
\DeclareMathOperator{\SL}{SL}
\DeclareMathOperator{\gldim}{gldim}
\DeclareMathOperator{\GKdim}{GKdim}
\DeclareMathOperator{\Frac}{Frac}

\DeclareMathOperator{\tensor}{\otimes}

\newcommand{\mC}{\mathcal C}
\newcommand{\mG}{\mathcal G}
\newcommand{\mM}{\mathcal M}
\newcommand{\mP}{\mathcal P}
\newcommand{\mR}{\mathcal R}
\newcommand{\mU}{\mathcal U}

\newcommand{\sM}{\mathscr M}
\newcommand{\sP}{\mathscr P}

\newcommand{\NN}{\mathbb N}
\newcommand{\ZZ}{\mathbb Z}

\title{Some Algebras Similar to the $2\times 2$ Jordanian Matrix Algebra}
\author{Jason Gaddis and Kenneth L. Price}

\address{(Gaddis)
Wake Forest University, Department of Mathematics at Statistics, 
P. O. Box 7388, Winston-Salem, NC 27109} 
\email{gaddisjd@wfu.edu}

\address{(Price)
University of Wisconsin - Oshkosh, Department of Mathematics, 
800 Algoma Blvd., Oshkosh, WI 54901} 
\email{pricek@uwosh.edu}

%\thanks{}
\subjclass[2010]{16S36, 16S85}
%\keywords{}
%\date{\today}

\begin{document}

\begin{abstract}
The impetus for this study is the work of Dumas and Rigal on the Jordanian
deformation of the ring of coordinate functions on $2\times 2$ matrices. 
We are also motivated by current interest in birational equivalence of noncommutative rings.
Recognizing the construction of the Jordanian matrix algebra as a skew polynomial ring, 
we construct a family of algebras relative to differential operator rings over a polynomial ring in one
variable which are birationally equivalent to the Weyl algebra over a polynomial ring in two variables.
\end{abstract}

\maketitle

\section{Introduction}

In the study of quantum groups, one wishes to understand algebras
(or their representation theory) which arise through various 
constructions, be it from the Yang-Baxter equation,
Hopf actions, or quantum enveloping algebras.
Our goal is contribute to the understanding of algebras that lie
slightly outside of the study of quantum groups.
That is, to study those algebras which are similar to quantum groups in some sense.
The classic example here is the Weyl algebra, or any of its quantum analogs.
One could approach this problem by looking at PBW deformations of quantum groups,
or by introducing parameters into one of the constructions listed above.

In this work, we will be primarily interested in the $2 \times 2$
quantum matrix algebras, with particular interest in the 
matrix algebra corresponding to the Jordan plane, denoted $\mj$.
We review relevant definitions in Section \ref{sec.matalg}.
Our goal is to put $\mj$ inside a larger class of algebras which
we call \textit{Generalized Jordanian Matrix Algebras} (GJMAs).
These are algebras birationally equivalent to $\mj$ in the sense
that they have the same quotient ring of fractions.
They also maintain a specific involution which can be thought
of as the transposition operator on $2 \times 2$ matrices.
We construct a family of GJMAs as iterated skew polynomial rings.
Much like the universal enveloping algebra of $sl_2$ and its generalizations \cite{Smith},
GJMAs are distinguished by a degree two central element,
which can also be thought of as a generalization of the
determinant in $\mj$.

In Section \ref{sec.skewpolys}, we review the definition of skew polynomial rings,
with an emphasis on those of differential operator type.
We introduce in Section \ref{sec.ispe} a class of algebras which we call
\textit{involutive skew polynomial extensions} (ISPE),
which generalizes the base rings of GJMAs.
These are two-step skew polynomial extensions with an automorphism
fixing the base ring and interchanging the two new generators.
It is shown that under certain conditions an ISPE over a ring $R$
is birationally equivalent to the first Weyl algebra over $R$.

The notion of an ISPE is specialized in Section \ref{sec.pmas} 
to those with base a polynomial ring in one variable.
Such algebras form the base ring of $\mj$
as well as the quantum matrix algebras $\mq$.
We classify these ISPEs in 
Propositions \ref{pma.wa}, \ref{pma.qp}, and \ref{prop.pmapres}.

Section \ref{sec.pmadiff} specializes further
to differential operator rings of the form $\kk[c][a;\delta_1]$.
This class includes the base ring of the Jordanian matrix algebra,
and in general certain elements of this class serve as the base ring of GJMAs.
These algebras are interesting in their 
own right from a representation theory point of view.
In particular, they merge the limited class of
$1$-dimensional representations coming from differential operator rings over $\kk[c]$
with the large class of $n$-dimensional representations corresponding to a
polynomial ring in two variables.

Finally, in Section \ref{sec.gjma}, we introduce and study GJMAs.
If $P$ is an ISPE over $\kk[c]$ of differential operator type 
discussed above and $M=P[b;\sigma_3,\delta_3]$ such that, 
under some localization of $M$,
$\delta_3$ is an inner $\sigma_3$-derivation and $\sigma_3$ an inner automorphism,
then $M$ is a birationally equivalent to $\mj$ (Proposition \ref{prop.gjmain}).
We construct a specific class of such algebras and prove that they are GJMAs (Proposition \ref{gjma.constr})
containing a central element which can be thought of as an analog of the determinant.
Like $\mj$, these algebras are noetherian domains of GK and global dimension four (Proposition \ref{ma.dim}).
The remainder of the work is devoted to a study of the prime ideals of such GJMAs.

\section{Quantum Matrix Algebras}
\label{sec.matalg}

Throughout, $\kk$ is an algebraically closed, 
characteristic zero field and all algebras are $\kk$-algebras. 
Isomorphisms should be read as `isomorphisms as $\kk$-algebras'.
All unadorned tensor products should be regarded as over $\kk$.

For $q \in \kk^\times$, the quantum plane is the $\kk$-algebra $\qp=\kk\langle x_1,x_2 \mid x_1x_2-qx_2x_1\rangle$.
Classically, the \textit{$2 \times 2$ quantum matrix algebra} relative to $q$, sometimes denoted $\qm$,
is the unique $\kk$-algebra on generators $x_{11}$, $x_{12}$, $x_{21}$, $x_{22}$ such that
there exist homomorphisms 
\begin{align*}
\qp &\rightarrow \mq \tensor \qp &\text{ and }& &\qp &\rightarrow \qp \tensor \mq \\
x_i &\mapsto x_{i1} \tensor x_1 + x_{i2} \tensor x_2 & & &x_j &\mapsto x_1 \tensor x_{1j} + x_2 \tensor x_{2j}.
\end{align*}
Making the identifications $b=x_{11}$, $c=x_{22}$, $a=x_{12}$, $d=x_{21}$, we get the relations
\begin{align*}
	ac &= q ca 	& & dc = q cd 	& & da = ad \\
	ba &= q ab 	& & bd = q db 	& & bc = cb + (q-q\inv)ad.
\end{align*}
In this way, $\mq$ may be regarded as a deformation of the coordinate ring of functions on $2 \times 2$ matrices.
For more details on this construction, 
the reader is encouraged to see \cite{BG}, \cite{GW}, and \cite{Manin}.

There are two candidates for deforming the Jordan plane, $\jp = k\langle x_1,x_2 \mid x_2x_1-x_1x_2+x_2^2 \rangle$.
The one presented in \cite{Manin} is not a domain.
We prefer the one presented in \cite{DMMZ} which may be constructed as above by replacing $\qp$ with $\jp$.
This Jordanian matrix algebra, denoted $\mj$ or $\jmn$, 
is the $\kk$-algebra on generators $a,b,c,d$ subject to the following relations.
\begin{align*}
	ac &= ca + c^2  	& &	bc = cb + ca + cd + c^2 \\	
	dc &= cd + c^2 		& &	bd = db + cb + cd - ad + d^2 \\
	da &= ad - cd + ca	& &	ba = ab + cb + cd - ad + a^2.
\end{align*}
The algebra $\mj$ is a domain and can be constructed as a skew polynomial ring.
Dumas and Rigal studied the prime spectrum and automorphisms of this algebra in \cite{DumasRigal},
though the presentation they give is slightly different (but equivalent) to the one
given here.

\begin{rmk}
\label{MJ Alternative}
Letting $u=d-a$, we have the alternate presentation of $\mj$. 
\begin{align*}
ac &= ca+c^2 	& & bc = cb+c(2a+u+c) \\
uc &= cu 		& & bu = ub+u(2a+u+c) \\
ua &= au-cu 		& & ba = (a+c)b+(c-u)a.
\end{align*}
\end{rmk}

\begin{rmk}
The formulas $\tau(a)=d$, $\tau(d)=a$, $\tau(c)=c$, and $\tau(b)=b$
determine either an automorphism $\tau:\mj \rightarrow \mj$ or $\tau:\mq \rightarrow \mq$. 
One should think of this operation as the transposition operator on $2 \times 2$ matrices.
For either $\mq$ or $\mj$ the automorphism group $G$ 
is the semidirect product $H\rtimes \{\tau \}$ for some subgroup $H$ of $G$. 
In the case of $\mq$, we have $H\cong (\kk^{\times})^{3}$ \cite[Theorem 2.3]{AlevChamarie}. 
In the case of $\mj$, $H$ is considerably more complex and the interested reader is referred to \cite[Proposition 3.1]{DumasRigal}.
\end{rmk}

In Section \ref{sec.gjma} we construct a family of $\kk$-algebras on generators $a,b,c,u$ with relations
\begin{align*}
	& ac=ca+cg, cu=uc, au=ua+ug \\
	& bc=cb+c\gamma, bu=ub+u\gamma, ba = (a+h)b + (h-u)a,
\end{align*}
where $g \in \kk[c]$, $h=cg'$, and $\gamma=h+u+2a$.
We denote such an algebra by $\mM_f$, where $f=cg$.
Note that $\mM_{c^2}$ gives the alternate presentation for $\mj$ in Remark \ref{MJ Alternative}.

The centers of $\mq$ and $\mj$ are each generated by a single degree two element, known as the \textit{quantum determinant}. 
In the case of $\mq$, the quantum determinant is $bc-qad$, and for $\mj$ the quantum determinant is $ad-cb-cd$.
The element $z=gb+(g-u)a-a^2$ is central in $\mM_f$.

If $M=\mq$ or if $M=\mj$ and $z$ is the quantum determinant of $M$, 
then $M/(z-1)$ is a deformation of the coordinate ring of functions on $\SL_2$.
In particular, $M/(z-1)$ is a \textit{noncommutative quadric} \cite{SmithVDB}. 
We can similarly construct such factor rings when $M$ is a GJMA as constructed above.
In this way, we view $\mM_f/(z-1)$ as a sort of nonhomogeneous noncommutative quadric.

\section{Skew polynomial rings and Weyl algebras}
\label{sec.skewpolys}

Let $R$ be a ring.
The first Weyl algebra over $R$, denoted $A_1(R)$, is the overring of $R$ 
with additional generators $x$ and $y$ which commute with $R$ and satisfy $xy=yx+1$.

Let $\sigma$ be an automorphism of $R$ and let $\delta$ be a $\sigma$-derivation, that is,
$\delta:R \rightarrow R$ is a $\kk$-linear map such that $\delta(rs) = \sigma(r)\delta(s) + \delta(r)s$ for all $r,s \in R$. 
The \textit{skew polynomial ring} (or Ore extension) $R[x;\sigma,\delta]$ is defined via the commutation rule 
$x r  = \sigma(r)x + \delta(r)$ for all $r \in R$. 
If $\delta=0$, then we omit it and write $R[x;\sigma]$. 
If $\sigma$ is the identity, then we omit it and write $R[x;\delta]$.
A skew polynomial ring of this last form is called a \textit{differential operator ring} over $R$.

A $\sigma$-derivation $\delta$ of $R$ is said to be 
$\sigma$-\textit{inner} if there exists $\theta \in R$ such that 
$\delta(r)=\theta r-\sigma(r)\theta$ for all $r \in R$. 
In this case, the skew polynomial ring 
$R[x;\sigma,\delta]$ is equivalent to $R[x-\theta;\sigma]$. 
An automorphism $\sigma$ of $R$ is said to be \textit{inner} 
if there exists a unit $\varphi \in R$ such that 
$\varphi\inv r\varphi = \sigma(r)$ for all $r \in R$. 
In this case, the skew polynomial ring $R[x;\sigma,\delta]$ 
is equivalent to $R[\varphi x;\varphi\delta]$.

Let $\sigma_1$ be an automorphism of $\kk[c]$ and 
$\delta_1$ a $\sigma_1$-derivation. 
Write $A=\kk[c][a;\sigma_1,\delta_1]$.
By \cite[Remark 2.1]{AVDVO}, $A$ is one of the following:
\begin{itemize}
	\item Quantum plane: 
	$\qp = \kk\langle a,c \mid ac-qca \rangle$, $q \in \kk^\times$;
	\item Quantum Weyl algebra: 
	$\wa = \kk\langle a,c \mid ac-qca-1 \rangle$, $q \in \kk^\times$;
	\item Differential operator ring: 
	$R_f = \kk\langle a,c \mid ca-ac+f \rangle$, $f \in \kk[c]$.
\end{itemize}
There is no overlap between the different classes except that
the first Weyl algebra $A_1(\kk)$ over $\kk$ is both a quantum Weyl algebra
with $q=1$, and a differential operator ring with $f=1$.
See \cite{GadIso} and \cite{Gad2gen} for further study on
isomorphisms within each class.

Our work will focus heavily on the class of differential operator rings above.
These algebras have been studied extensively by various authors.
For a good overview, see \cite{GW}.
We review some basic facts about these algebras here.

By \cite[Theorem 1.1]{Gad2gen}, $R_f$ is isomorphic to either $A_1(\kk$),
the universal enveloping algebra of the $2$-dimensional solvable Lie algebra ($R_c$), 
the Jordan plane ($R_{c^2}$), or the deformed Jordan plane ($R_{c^2-1}$).
For all $f \in \kk[c]$, the differential operator ring $R_f$ is a
noetherian domain of global and GK dimension two.
In characteristic zero, the center of $R_f$ is $\cnt(R_f)=\kk$.

Denote by $\Frac(R)$ the clasical quotient ring of $R$.
Two rings $R$ and $S$ are said to be \textit{birationally equivalent} 
if $\Frac(R) \iso \Frac(S)$.
We will make use of additional properties for differential operator 
rings from \cite[Proposition 1.8]{DumasRigal} and \cite[Proposition 2.6]{ADinvariants}.
We state these results below using our notation.

\begin{prop}[Alev and Dumas]
\label{prop.diff}
Choose $f,g \in \kk[c]$, $f,g \neq 0$.
\begin{enumerate}
	\item The differential operator ring $R_f$ is birationally equivalent to $R_1=\fwa$.
	\item $R_f \iso R_g$ if and only if there exists 
	$\lambda, \alpha \in \kk^\times$ and $\beta \in \kk$ such that 
	$f(c) = \lambda g(\alpha c + \beta)$.
	\item For any $\alpha, \lambda \in \kk^\times$ and $\beta \in \kk$
	there is an automorphism of $R_f$ such that $f(\alpha c + \beta)=\alpha\lambda f(c)$
	determined by the assignments $a \mapsto \lambda a - g$ and $c \mapsto \alpha c + \beta$.
	Moreover, when $f \notin \kk$ (i.e., $R_f \niso \fwa$) any automorphism of $R_f$ has the above form.
\end{enumerate}
\end{prop}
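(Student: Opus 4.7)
The plan is to prove (1) directly by change of variables in the skew field, then derive (3) (and thereby (2)) by first exhibiting the explicit family of automorphisms and then establishing a rigidity result that constrains all automorphisms to that form.

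For (1), I would work inside $\Frac(R_f)$. Since $ac - ca = f(c)$ with $f \neq 0$, the element $\partial := f(c)\inv a$ satisfies $\partial c - c\partial = 1$, so $\partial$ and $c$ generate a copy of the Weyl algebra $A_1(\kk) = R_1$ inside $\Frac(R_f)$. Because $a = f(c)\partial$, the skew subfield generated by $\partial, c$ contains $a$ and thus equals $\Frac(R_f)$, giving $\Frac(R_f) \iso \Frac(R_1)$.

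For the constructive parts of (2) and (3), assume $(\alpha, \lambda, \beta)$ satisfies $f(\alpha c + \beta) = \alpha \lambda f(c)$ and fix any $g \in \kk[c]$. A short computation shows that the assignments $c \mapsto \alpha c + \beta$ and $a \mapsto \lambda a - g$ preserve the defining relation of $R_f$, because $[\lambda a - g,\, \alpha c + \beta] = \lambda\alpha f(c) = f(\alpha c + \beta)$. The universal property of the skew polynomial ring $R_f$ produces the corresponding endomorphism, which is invertible by a symmetric construction. With $(\alpha\lambda)\inv$ in place of $\lambda$, the same recipe yields, whenever $f(c) = \lambda g(\alpha c + \beta)$, an isomorphism $R_g \to R_f$, establishing the ``$\Leftarrow$'' direction of (2).

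The harder implications are the ``moreover'' in (3) and ``$\Rightarrow$'' in (2): every (iso)morphism has the stated form when $f, g \notin \kk$. The starting lemma is that $\kk[c]$ is the centralizer $C_{R_f}(c)$ when $f \neq 0$, which I would prove by induction showing that $[c, a^n]$ has $a$-leading term $-n f(c) a^{n-1}$, nonzero for $n \geq 1$. Hence any isomorphism $\phi$ identifies $\kk[c] \subseteq R_f$ with the centralizer of $\phi(c)$ in the target. To pin $\phi(c)$ to the form $\alpha c + \beta$, one needs to rule out $\phi(c)$ having positive $a$-degree; this is where the hypothesis $f \notin \kk$ enters. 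I would argue via the commutator identity $[\phi(a), \phi(c)] = f(\phi(c))$: the right side has $a$-degree proportional to $\deg(f)$ times the $a$-degree of $\phi(c)$, while the left side drops at least one $a$-degree because $\mathrm{gr}\, R_g$ is commutative. Once $\phi(c) = \alpha c + \beta$ is secured, matching commutators forces $\phi(a) = \mu a + h(c)$, and the degree-zero part of the identity yields the required polynomial relation $\mu\alpha f(c) = g(\alpha c + \beta)$.

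The principal obstacle is this rigidity step, specifically ruling out exotic images of $c$. The hypothesis $f \notin \kk$ is essential: in the Weyl algebra itself (the case $f \in \kk$) one has involutions swapping $a$ and $c$, so the linear-in-$c$ form of $\phi(c)$ can fail. Handling the case $f \notin \kk$ cleanly requires careful degree bookkeeping on both sides of $[\phi(a), \phi(c)] = f(\phi(c))$, exploiting that the associated graded $\mathrm{gr}\, R_g \iso \kk[c, a]$ is a commutative polynomial ring.
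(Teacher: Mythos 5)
The paper offers no proof of this proposition at all: it is quoted (with notation adjusted) from Dumas--Rigal and Alev--Dumas, so the only thing to assess is whether your argument is correct on its own terms. Part (1) and the constructive halves of (2) and (3) are fine and standard: $f(c)\inv a$ and $c$ generate a copy of $\fwa$ inside $\Frac(R_f)$, and the computation $[\lambda a - g,\ \alpha c+\beta]=\alpha\lambda f(c)$ verifies the displayed maps and the ``$\Leftarrow$'' direction of (2).

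The gap is in the rigidity step, exactly where you locate the difficulty. Writing $d=\deg_a\phi(c)$ and $e=\deg_a\phi(a)$, your comparison of $a$-degrees in $[\phi(a),\phi(c)]=f(\phi(c))$ gives only $d\deg f\le d+e-1$, i.e.\ $e\ge d(\deg f-1)+1$. This is not a contradiction: it merely bounds $e$ from below, and when $\deg f=1$ (the case $R_c=U(L)$ for the two-dimensional solvable Lie algebra, which the proposition must cover) it says nothing at all. You have no upper bound on $e$, and surjectivity does not supply one by degree counting alone --- in $\fwa$ the assignment $c\mapsto a$, $a\mapsto -c$ is an automorphism, so the commutativity of $\mathrm{gr}$ cannot by itself force $\phi(c)\in\kk[c]$; the hypothesis $f\notin\kk$ has to enter through something more structural. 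The standard repair: the commutator ideal of $R_f$ is exactly the ideal $(f(c))$ (it contains $f=[a,c]$, and $R_f/(f(c))$ is commutative), it is preserved by any isomorphism, and $f(c)$ is normal; since two normal generators of the same ideal in a domain whose only units are scalars differ by a scalar, any isomorphism $\phi\colon R_f\to R_g$ satisfies $f(\phi(c))=\mu g(c)$ with $\mu\in\kk^\times$. The right side has $a$-degree zero, and $\mathrm{gr}\,R_g$ is a domain, so $\phi(c)\in\kk[c]$; comparing $c$-degrees and running the same argument for $\phi\inv$ forces $\phi(c)=\alpha c+\beta$. From that point your centralizer lemma and the expansion $[\sum_i h_i a^i,\,c]=\sum_i i\,h_i f a^{i-1}+\cdots$ do complete the proof as you describe, yielding $\phi(a)=\lambda a+h_0(c)$ and the stated polynomial identity.
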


We conclude this section with automorphisms
of the first Weyl algebra, 
$\fwa = \kk\langle a,c \mid ac=ca+1 \rangle$.
Diximier \cite{dix} gave generators for $\Aut(\fwa)$,
but we only need the limited subgroup of
automorphisms detailed in the following proposition.

\begin{prop}
\label{prop.fwa}
If $\sigma \in \Aut(\fwa)$ satisfies $\sigma(c)=c$, 
then $\sigma(a) = a-g$ for some $g \in \kk[c]$.
\end{prop}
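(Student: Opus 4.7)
The plan is to exploit the defining relation $ac - ca = 1$ together with the PBW-style decomposition of $A_1(\kk)$ as the differential operator ring $\kk[c][a;d/dc]$. Since $\sigma$ is an algebra homomorphism fixing $c$, applying $\sigma$ to $ac - ca = 1$ yields the constraint $[\sigma(a), c] = 1$. The whole proof will come down to showing that this bracket condition forces $\sigma(a)$ to be linear in $a$ with leading coefficient $1$ and free term a polynomial in $c$.

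First I would write $\sigma(a)$ in the unique normal form $\sigma(a) = \sum_{i=0}^{n} p_i(c)\,a^i$ with $p_i(c) \in \kk[c]$, using the fact that $\{c^j a^i : i,j \geq 0\}$ is a $\kk$-basis of $\fwa$. Next I would establish the commutator identity $[a^i, c] = i\,a^{i-1}$ by a short induction on $i$, starting from $[a,c]=1$ and using $a^i c = a^{i-1}(ca + 1)$. Since $p_i(c)$ commutes with $c$, this gives
\begin{equation*}
[\sigma(a), c] \;=\; \sum_{i=1}^{n} i\,p_i(c)\,a^{i-1}.
\end{equation*}

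Setting this equal to $1$ and reading off coefficients against the PBW basis, the coefficient of $a^0$ forces $p_1(c) = 1$, and for each $k \geq 1$ the coefficient of $a^k$ forces $(k+1)p_{k+1}(c) = 0$. Because $\mathrm{char}(\kk) = 0$, this gives $p_i = 0$ for all $i \geq 2$. Hence $\sigma(a) = a + p_0(c)$, and setting $g := -p_0 \in \kk[c]$ yields the desired form.

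There is no real obstacle here; the only mild subtlety is being careful that the normal form is taken consistently (writing $c$'s on the left of $a$'s) so that $[p_i(c), c] = 0$ and the commutator computation cleanly separates the $p_i$'s. The characteristic-zero hypothesis is used precisely to conclude $p_i = 0$ from $i p_i = 0$ for $i \geq 2$; this is implicit in the standing assumption on $\kk$ in Section \ref{sec.matalg}. Note that the argument only uses that $\sigma$ is a $\kk$-algebra endomorphism fixing $c$, so it in fact characterizes all such endomorphisms, not just automorphisms.
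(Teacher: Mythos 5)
Your proposal is correct and follows essentially the same route as the paper's proof: expand $\sigma(a)$ in the PBW basis, apply $[\,\cdot\,,c]$ to the relation, use $[a^i,c]=ia^{i-1}$, and compare coefficients (the paper writes the $\kk[c]$-coefficients on the right rather than the left, which changes nothing). The extra remarks about characteristic zero and the argument applying to all endomorphisms fixing $c$ are accurate but not needed beyond what the paper does.
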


\begin{proof}
Write $\sigma(a)= \sum_{i=0}^n a^i g_i$ for $g_i \in \kk[c]$. Then
\begin{align*}
1 	&= \sigma\left([a,c]\right)
	= \left[ \sigma(a), c \right]
	= \left[  \sum_{i=0}^n a^i g_i, c \right]
	= \sum_{i=0}^n [a^i,c] g_i
	= \sum_{i=0}^n i a^{i-1} g_i.
\end{align*}
Consequently, $g_i=0$ for $i>1$, $g_1=1$, and $g_0$ is arbitrary.
Set $g=-g_0$.
\end{proof}

\section{Involutive skew polynomial extensions}
\label{sec.ispe}

\begin{defn}
Let $R$ be a ring.
We say $S=R[a;\sigma_1,\delta_1][d;\sigma_2;\delta_2]$ 
is an \textbf{involutive skew polynomial extension} (ISPE) of $R$ if 
there exists an involution $\tau \in \Aut(S)$ such that 
$\tau(a)=d$, $\tau(d)=a$, and $\tau(r)=r$ for all $r \in R$.
\end{defn}

\begin{rmk}
One possible generalization of this definition is to requires $S$
to be a {\bf double extension} of $R$ as defined by Zhang and Zhang \cite{zz1,zz2}.
Because our primary interest is in (ordinary) skew polynomial rings,
we do not make that definition here.
\end{rmk}

It is well known that such an $S$ is a noetherian domain if $R$ is.
We give several examples of ISPEs below.

\begin{ex}
When $R=\kk$, $R[a;\sigma_1,\delta_1]$ is a polynomial extension 
and so $\sigma_1=\id_R$ and $\delta_1=0$.
Then $S$ is generated by $a$ and $d$ subject to 
the single relation $0=qad-da+f(a)$ for some $q \in \kk^\times$
and $f(a) \in \kk[a]$.
Applying the involution $\tau$ gives
\[ 0 = qda - ad + f(d) = q(qad+f(a))-ad+f(d) = (q^2-1)ad + (qf(a)+f(d)).\]
Hence $q^2=1$ and $f$ is a constant polynomial.
This implies that either $S=\qp$ with $q=\pm 1$ or $S=\wa$ with $q=-1$.
\end{ex}

Let $S$ be an ISPE of a ring $R$. 
The element $u=d-a$ of $S$ appears in \cite{DumasRigal} 
and is critical to the study of birational equivalence in ISPEs. 
Using the involution $\tau$, one can show 
$\sigma_1(r)=\sigma_2(r)$, $\delta_1(r)=\delta_2(r)$,
and hence $ur=\sigma_1(r)u$ for all $r \in R$. 

\begin{prop}
Let $S$ be an ISPE of a ring $R$.
The element $u=d-a$ is normal in $S$ if and only if
$\delta_2(a)=a^2-\sigma_2(a)a$.
\end{prop}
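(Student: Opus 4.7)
The plan is to work in the normal-form decomposition $S = \bigoplus_{j \geq 0} B \cdot d^j$, where $B = R[a;\sigma_1,\delta_1]$, and reduce the normality of $u = d - a$ to a coefficient comparison in this decomposition.

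For the forward direction, assume $u$ is normal, so $ua = vu$ for some $v \in S$. The skew polynomial relation gives $ua = (d-a)a = da - a^2 = \sigma_2(a)d + \delta_2(a) - a^2$, an element of $d$-degree at most $1$. Writing $v = \sum_{j=0}^n v_j d^j$ in normal form with $v_n \neq 0$, I would show that $vu = vd - va$ has $d$-leading term $v_n d^{n+1}$: since $d^j a = \sigma_2^j(a) d^j + (\text{lower order in } d)$ by induction on $j$, the product $va = \sum v_j d^j a$ has $d$-degree only $n$, so no cancellation occurs at the top of $vd$. Thus $n \leq 0$, hence $v \in B$, and matching the $d^1$ and $d^0$ coefficients of $vd - va$ against those of $ua$ yields $v = \sigma_2(a)$ and the condition $\delta_2(a) = a^2 - \sigma_2(a)a$.

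For the converse, assume the condition. A direct substitution turns $da = \sigma_2(a)d + \delta_2(a)$ into $da = \sigma_2(a)(d-a) + a^2 = \sigma_2(a)u + a^2$, so $ua = \sigma_2(a)u \in Su$. Combined with the known identity $ur = \sigma_1(r)u$ for $r \in R$, multiplicativity promotes this to $us = \sigma_2(s)u$ for every $s \in B$. For the remaining generator $d$, a short computation using the condition yields $ud - du = (\sigma_2(a) - a)u$; since $du \in Su$ trivially and $(\sigma_2(a) - a)u \in Su$, we get $ud \in Su$. A routine induction on the length of monomials in the generators $R \cup \{a, d\}$ then gives $uS \subseteq Su$.

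For the opposite inclusion $Su \subseteq uS$, invertibility of $\sigma_2$ converts $us = \sigma_2(s)u$ (for $s \in B$) into $su = u\sigma_2^{-1}(s)$, so $Bu \subseteq uS$. For the generator $d$, I would apply $\tau$ to the identity $au = u\sigma_2^{-1}(a)$: using $\tau(u) = -u$, $\tau(a) = d$, and $\tau|_R = \id_R$, this produces $du = u\tau(\sigma_2^{-1}(a)) \in uS$. Induction again completes $Su \subseteq uS$, yielding normality. The main technical point is the degree-counting in the forward direction, which depends on the fact that $d^j a$ has $d$-leading term $\sigma_2^j(a) d^j$ (a strictly lower power of $d$ than $d^{j+1}$); everything else is direct manipulation with the defining relations and the involution $\tau$.
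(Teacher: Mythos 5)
Your proof is correct and takes essentially the same route as the paper: both hinge on the identity $(d-a)a=\sigma_2(a)d+\delta_2(a)-a^2$ together with a $d$-degree comparison to force the coefficient $v$ to lie in $R[a;\sigma_1,\delta_1]$ and equal $\sigma_2(a)$. You simply spell out the degree count and the generator-by-generator verification of $uS=Su$ that the paper leaves implicit.
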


\begin{proof}
We have already seen that $ur=\sigma_1(r)u$ for all $r \in R$.
Moreover, the $d$-degree of $\delta_2(a)-a^2$ is zero in
\[ (d-a)a = \sigma_2(a)d + \delta_2(a) - a^2.\]
Thus, $(d-a)$ is normal if and only if $(d-a)a=\sigma_2(a)(d-a)$
if and only if $\delta_2(a)-a^2=-\sigma_2(a)a$.
\end{proof}

In this case, we have an alternate presentation of $S$ with relations
\begin{align*}
ar &= \sigma_1(r)a + \delta_1(r) \text{ and }
ur = \sigma_1(r)u \text{ for all } r \in R, \\
ua &= \sigma_2(a)u.
\end{align*}

\begin{prop}
\label{ispe.birat}
Let $S$ be an ISPE of $R$ and suppose the following conditions hold:
\begin{itemize}
\item $R$ is affine and commutative with generators 
$r_1,\hdots,r_n$ and $\sigma_1 = \id_R$.
\item $u$ is normal in $S$.
\item $r_i\inv \delta_1(r_i) = r_j\inv \delta_1(r_j)$ for all $i,j \in \{1,\hdots,n\}$.
\item $a \left(r_i\inv u\right) = \left(r_i\inv u\right) a$.
\end{itemize}
Then $S$ is birationally equivalent to $A_1(R)$.
\end{prop}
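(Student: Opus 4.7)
The plan is to exhibit explicit Weyl-algebra generators inside $\Frac(S)$ using the normal element $u$ and the common logarithmic derivative $t := r_i^{-1}\delta_1(r_i)$, and then match fraction fields. First I unpack the hypotheses: the involution $\tau$ and $\sigma_1=\id_R$ force $\sigma_2|_R=\id_R$ and $\delta_2|_R=\delta_1$, so $[u,r]=0$ and $[a,r]=\delta_1(r)$ for $r\in R$. Writing $\delta_1(r_i)=r_i t$ with $t\in K:=\Frac(R)$, the identity $[a,r_i^{-1}]=-tr_i^{-1}$ follows from $[a,r_ir_i^{-1}]=0$, and expanding $[a,r_i^{-1}u]=0$ then yields $[a,u]=tu$. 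Now set $\alpha:=r_1^{-1}u\in\Frac(S)$: commutativity of $R$ together with $[u,R]=0$ imply $\alpha$ commutes with $R$, and the fourth condition gives $[a,\alpha]=0$. Thus $\alpha$ is central in the subring $T\subseteq\Frac(S)$ generated by $R$, $a$, and $\alpha$; since $u=r_1\alpha\in T$ we have $\Frac(T)=\Frac(S)$, and $T\iso R[\alpha][a;\tilde\delta_1]$ where $\tilde\delta_1|_R=\delta_1$ and $\tilde\delta_1(\alpha)=0$.

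Next I identify the constants and build the Weyl pair. Let $\hat\delta_1$ denote the extension of $\delta_1$ to $K$, and let $K_0\subseteq K$ be its constant subfield. A direct computation gives $\hat\delta_1(r_j/r_i)=0$, so every ratio $r_j/r_i$ lies in $K_0$. One may assume $t\ne 0$, since $t=0$ would force $\delta_1=0$ on $R$ and collapse $S$ to a commutative ring, precluding birational equivalence with $A_1(R)$. If $r_1$ were algebraic over $K_0$ with minimal polynomial $m(X)\in K_0[X]$, then $\hat\delta_1(m(r_1))=tr_1 m'(r_1)=0$ would force $t=0$ in characteristic zero; hence $r_1$ is transcendental over $K_0$, and combined with $r_j=(r_j/r_1)r_1\in K_0(r_1)$ this gives $K=K_0(r_1)$. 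Defining $x:=(r_1 t)^{-1}a\in\Frac(T)$ one computes $[x,r_1]=1$, $[x,c]=(r_1 t)^{-1}\hat\delta_1(c)=0$ for $c\in K_0$, and $[x,\alpha]=0$, so the subring of $\Frac(T)$ generated by $K_0(\alpha)$, $x$, and $r_1$ is a copy of $A_1(K_0(\alpha))$.

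Finally I match fraction fields. The inclusion $\Frac(A_1(K_0(\alpha)))\subseteq\Frac(S)$ is automatic. Conversely every generator of $T$ lies in $\Frac(A_1(K_0(\alpha)))$: $\alpha\in A_1(K_0(\alpha))$, $K=K_0(r_1)\subseteq\Frac(A_1(K_0(\alpha)))$, and $a=r_1 t x\in\Frac(A_1(K_0(\alpha)))$, giving $\Frac(S)=\Frac(A_1(K_0(\alpha)))$. The $K_0$-algebra isomorphism $K_0(\alpha)\xrightarrow{\sim}K_0(r_1)=K$ sending $\alpha\mapsto r_1$ then identifies $A_1(K_0(\alpha))$ with $A_1(K)$, so $\Frac(S)\cong\Frac(A_1(K))=\Frac(A_1(R))$. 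The main obstacle lies in the middle paragraph: extracting from the uniform-logarithmic-derivative hypothesis that $K/K_0$ is purely transcendental of degree one generated by any single $r_i$, after which the Weyl pair $(x,r_1)$ over the central subfield $K_0(\alpha)$ falls out and the fraction-field comparison is routine.
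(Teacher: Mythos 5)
Your argument is correct in substance and lands in the same place as the paper's, but it takes a genuinely different and more careful route. The paper's proof is a direct construction: it asserts that the elements $r_i\inv u$ generate an isomorphic copy of $R$ inside $\Frac(S)$ and then writes down explicit mutually inverse maps on generators ($\Psi(x)=af\inv$, $\Psi(y)=r_1$, $\Psi(t_i)=r_i\inv u$, and the corresponding $\Phi$). You instead do a structural analysis: you pass to the constant subfield $K_0$ of the extended derivation on $K=\Frac(R)$, use the hypothesis $r_i\inv\delta_1(r_i)=r_j\inv\delta_1(r_j)$ to put every ratio $r_j/r_i$ in $K_0$, show (characteristic zero, $t\neq 0$) that $r_1$ is transcendental over $K_0$ so that $K=K_0(r_1)$, and then exhibit the Weyl pair $\left(\delta_1(r_1)\inv a,\,r_1\right)$ over the central field $K_0(\alpha)$ with $\alpha=r_1\inv u$, matching $K_0(\alpha)\iso K_0(r_1)=K$ at the end. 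This buys precision exactly where the paper is loosest: the assignment $r_i\mapsto r_i\inv u$ is not in general a ring homomorphism (it does not even respect $\kk$-linear relations among the $r_i$, since $r\mapsto r\inv u$ sends $\lambda r$ to $\lambda\inv r\inv u$), and as printed the paper's $\Psi$ gives $[\Psi(x),\Psi(y)]=[af\inv,r_1]=\delta_1(r_1)f\inv=r_1$ rather than $1$; your normalization $x=(r_1t)\inv a$ repairs this, and your identification of the central subfield as $K_0(\alpha)\iso K$ is the statement that actually holds.

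Two caveats. Your dismissal of $t=0$ is not a proof step but an observation that the proposition fails in that degenerate case (there $S$ is commutative while $\Frac(A_1(R))$ is not); this defect is shared with the paper, whose formulas likewise invert $f=r_1\inv\delta_1(r_1)$, so it should be read as a missing nondegeneracy hypothesis rather than a flaw peculiar to your argument. Separately, you should justify that the subring generated by $K_0(\alpha)$, $x$, $r_1$ is a faithful copy of $A_1(K_0(\alpha))$ (simplicity of the Weyl algebra over a characteristic-zero field, or a filtration argument) and that $\alpha$ is transcendental over $K$ (it has $u$-degree one in $S[r_1\inv]$, which is free over $R[r_1\inv]$ on the monomials $a^iu^j$); both are routine and easily supplied.
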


\begin{proof}
Let $Q=\Frac(S)$ and let $x,y,t_i$ be the standard generators of $A_1(R)$. 
Let $f=r_1\inv \delta_1(r_1)$.
Because $u$ commutes with $R$, 
the elements $r_i\inv u$ in $Q$ generate an isomorphic copy of $R$. 
We associate these elements with the standard generators $t_i$ of $R$ in $A_1(R)$. 
Hence, by abuse of notation, $f$ may simultaneously be thought of 
as an element of $R$ in the generators $r_i$ and in the $t_i$. 
It now follows by the hypotheses that there are inverse homomorphisms 
$\Phi :Q \rightarrow \Frac(A_1(R))$ and
$\Psi :\Frac(A_1(R))\rightarrow Q$ given by
\begin{align*}
& \Phi (a)=xf,		\Phi(u)=yt_1,	\Phi(r_i)=t_i\inv yt_1, \\
& \Psi (x)=af\inv,	\Psi(y)=r_1,		\Psi(t_i)=r_i\inv u.
\end{align*}
\end{proof}

\begin{ex}
Let $R$ be any affine commutative ring with generators $r_1,\hdots,r_n$.
Let $S$ be the extension of $R$ with relations
$a r_i = r_i a + r_i$, $d r_i = r_i d + r_i$, and $da = (a-1)d + a$.
Then $S$ is an ISPE of $R$.
By Proposition \ref{ispe.birat}, $S$ is birationally equivalent to $A_1(R)$.
\end{ex}

\begin{ex}
Let $R=\kk[c]$, 
$\sigma_1(c)=\sigma_2(c)=c$, 
$\delta_1(c)=\delta_2(c)=c^2$, 
$\sigma_2(a)=(a-c)$, and $\delta_2(a)=ca$. 
Then $S$ is an ISPE. 
In particular, $S$ is the base ring in the skew polynomial construction of $\mj$.
By Proposition \ref{prop.pma-birat}, $S$ is birationally equivalent to $A_1(\kk[t])$.
\end{ex}

\section{Classification of ISPEs over $\kk[c]$}
\label{sec.pmas}

Throughout this section, 
let $A=\kk[c][a;\sigma_1;\delta_1]$ and 
$P=A[d;\sigma_2;\delta_2]$.

\begin{prop}
\label{pma.wa}
If $P$ is an ISPE with $A=\wa$ and $q \neq 1$, 
then $q=-1$ and $P$ has relations $ac+ca=1$, $dc+cd=1$, 
$da+ad=h$ with $h \in \kk[c]$.
\end{prop}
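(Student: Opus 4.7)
The plan is to apply the involution $\tau$ to the defining relations of $P$ in order to force the structure. Applying $\tau$ to $ac - qca = 1$ immediately gives $dc - qcd = 1$, so $\sigma_2(c) = qc$ and $\delta_2(c) = 1$. Next, applying $\tau$ to the relation $da = \sigma_2(a)d + \delta_2(a)$ yields the identity
\[
ad = \tau(\sigma_2(a))\,a + \tau(\delta_2(a))
\]
in $P$. Working in the PBW basis $\{c^i a^j d^k\}$ and using iteratively $d^j a = \sigma_2^j(a)\,d^j + (\text{lower $d$-order})$, the $d$-degrees of the reduced forms of $\tau(\sigma_2(a))a$ and $\tau(\delta_2(a))$ equal the $a$-degrees of $\sigma_2(a)$ and $\delta_2(a)$, respectively. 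Matching the right-hand side against the $d$-degree one of $ad$, and ruling out the $a$-degree zero case by the fact that $\sigma_2$ is an automorphism (hence surjective), forces $\sigma_2(a) = \alpha a + \beta$ and $\delta_2(a) = \gamma a + \eta$ with $\alpha, \beta, \gamma, \eta \in \kk[c]$ and $\alpha \ne 0$.

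Two further constraints then arise. The requirement that $\sigma_2$ preserve $ac - qca = 1$ (equivalently, that $[\sigma_2(a), c]_q = q^{-1}$ where $[x,y]_q := xy - qyx$) computes to $\alpha = q^{-1} + (q-1)c\beta$. Substituting the explicit forms into the identity above and normalizing via $da = \alpha ad + \beta d + \gamma a + \eta$ gives
\[
ad = \alpha^2\,ad + (\alpha\beta + \gamma)\,d + (\alpha\gamma + \beta)\,a + (\alpha+1)\,\eta,
\]
equivalent to $\alpha^2 = 1$, $\alpha\beta + \gamma = 0$, $\alpha\gamma + \beta = 0$, and $(\alpha+1)\eta = 0$. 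The case $\alpha = 1$ is incompatible with $\alpha = q^{-1} + (q-1)c\beta$ and $\beta \in \kk[c]$ when $q \ne 1$ (it forces $c\beta = 1/q$); the case $\alpha = -1$ admits a polynomial solution only when $q = -1$ and $\beta = 0$, whence $\gamma = 0$ and $\eta \in \kk[c]$ is free. Writing $h := \eta$ recovers the stated relations $ac + ca = 1$, $dc + cd = 1$, and $da + ad = h$.

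The main obstacle is the $a$-degree reduction in the first paragraph: to rule out $\deg_a \sigma_2(a) \ge 2$ rigorously requires showing that the leading $d$-coefficient of $\tau(\sigma_2(a))a$ cannot cancel against that of $\tau(\delta_2(a))$. This holds because the former involves $\sigma_2^j(a)$ with nontrivial $a$-content (its $a$-degree is a positive power of $j$), while the latter contributes only an element of $\kk[c]$ at the top $d$-degree, so the two cannot annihilate in the basis $\{c^i a^j\}$ of $A$. Once this reduction is in hand, the remaining work is routine polynomial arithmetic in $\kk[c]$.
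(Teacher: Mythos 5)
Your proof is correct, but it takes a genuinely different route from the paper's. The paper short-circuits the hardest step by citing the known description of automorphisms of $\wa$ (\cite[Proposition 1.5]{ADrigid}) to conclude immediately that $\sigma_2(a)=q\inv a$; it then combines $da=q\inv ad+\delta(a)$ with its $\tau$-image to get $(q-q\inv)ad=\delta(a)+q\tau(\delta(a))$ and reads off $q=\pm1$ and $\delta(a)\in\kk[c]$ from a PBW degree comparison. You instead re-derive the linearity of $\sigma_2(a)$ and $\delta_2(a)$ from first principles via the $d$-degree argument on $ad=\tau(\sigma_2(a))a+\tau(\delta_2(a))$, and then extract $q=-1$ from the system $\alpha^2=1$, $\alpha\beta+\gamma=0$, $\alpha\gamma+\beta=0$, $(\alpha+1)\eta=0$ together with the compatibility condition $\alpha=q\inv+(q-1)c\beta$ coming from $\sigma_2$ preserving the relation $ac-qca=1$; I checked these computations and they are right. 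What your approach buys is self-containment (no appeal to the automorphism classification of the quantum Weyl algebra) and it simultaneously recovers $\sigma_2(a)=q\inv a=-a$ rather than assuming it; the cost is more polynomial bookkeeping. Two minor remarks: the phrase ``its $a$-degree is a positive power of $j$'' is garbled --- what you need, and what your surrounding reasoning actually establishes, is that $\sigma_2^j(a)\notin\kk[c]$ for all $j\ge 1$ because $\sigma_2^j$ is surjective and maps $\kk[c]$ into itself, so its leading $d$-coefficient cannot cancel a $\kk[c]$-term. Also, you prove only the forward implication (which is all the statement asserts); the paper additionally verifies that every $h\in\kk[c]$ does yield a valid $\sigma$-derivation, which you could add in one line by checking $\delta(ac+ca-1)=\delta(a)c-c\delta(a)=0$.
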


\begin{proof}
We have $A=\kk\langle a,c \mid ac-qca-1 \rangle$.
Let $\sigma=\sigma_2$ and $\delta=\delta_2$.
The existence of the involution $\tau$ implies $dc=qcd+1$. 
In particular, $\sigma(c)=qc$ and $\delta(c)=1$.
Because $\sigma$ is an automorphism of $\wa$, then $\sigma(a)=q\inv a$ \cite[Proposition 1.5]{ADrigid}
giving the final relation
\begin{align}
\label{wa.rel1}
da=q\inv ad + \delta(a).
\end{align}
Applying $\tau$ to \eqref{wa.rel1} gives $ad = q\inv da + \tau(\delta(a))$, or equivalently,
\begin{align}
\label{wa.rel2}
da=q ad - q\tau(\delta(a)).
\end{align} 
Combining \eqref{wa.rel1} and \eqref{wa.rel2} yields
$(q-q\inv)ad = \delta(a) + q\tau(\delta(a))$.
Since the $d$-degree (resp. $a$-degree) of $\delta(a)$ (resp. $\tau(\delta(a))$) is zero,
then $(q-q\inv)ad=0$, implying $q=\pm 1$.

If $q=-1$, then we are left only to determine $\delta(a)$.
The above computation shows that $\delta(a)$ is $\tau$-invariant.
Thus, $\delta(a) \in \kk[c]$.
We need only show that any choice of polynomial in $\kk[c]$ produces a valid $\sigma$-derivation.
\begin{align*}
\delta(ac+ca - 1) 
&= \left( \sigma(a)\delta(c) + \delta(a)c \right) + \left( \sigma(c)\delta(a) + \delta(c)a\right) \\
&= \left( -a + \delta(a)c \right) + \left( -c\delta(a) + a \right)
= \delta(a)c - c\delta(a).
\end{align*}
Thus, $\delta$ is a $\sigma$-derivation if $\delta(a)$ commutes with $c$.
\end{proof}

\begin{rmk}
Set $h=-2$ in Proposition \ref{pma.wa} and let $I$ denote the ideal generated by 
$c^2$, $a^2$, and $d^2$, which are all central elements of $P$ in this case. 
The elements $x_1=a+c+I$, $x_2=c+d+I$, and $x_3=\sqrt{-1/2}(a+d)+I$ of the factor ring $P/I$ satisfy the following:
\[ x_ix_j = \begin{cases}
-x_jx_i & \text{if }  i\neq j \\ 
1 & \text{if }  i=j.
\end{cases}
\]
It is easy to see that $P/I$ is isomorphic to the Clifford algebra of a
$3$-dimensional regular quadratic space over $\kk$. 
(A good reference for Clifford algebras is \cite{Lam}.)
\end{rmk}

We now proceed to the quantum plane case.

\begin{prop}
\label{pma.qp}
Choose $q \in \kk^\times$ such that $q \neq 1$ and let $A=\qp$. 
There are two possible ISPEs
$P=A[d;\sigma_2,\delta_2]$ with common relations 
$ac=qca$, $dc=qcd$, and either $da=ad$ or $da+ad=h$ for some $h \in \kk[c]$.
\end{prop}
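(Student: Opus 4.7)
My plan closely mirrors the strategy used for Proposition \ref{pma.wa}. Write $\sigma=\sigma_2$ and $\delta=\delta_2$. First, apply the involution $\tau$ to the defining relation $ac-qca=0$ of $A=\qp$ to obtain $dc=qcd$. Matching this against the general commutation $dc=\sigma(c)d+\delta(c)$ forces $\sigma(c)=qc$ and $\delta(c)=0$.

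Next, I would pin down $\sigma(a)$. Since $\sigma$ is an automorphism of $\qp$, the element $\sigma(a)$ must satisfy $\sigma(a)\sigma(c)=q\sigma(c)\sigma(a)$, which simplifies to $\sigma(a)\,c=qc\,\sigma(a)$. Expanding $\sigma(a)=\sum_{i,j}\mu_{ij}c^i a^j$ in the PBW basis of $\qp$ and using $a^j c = q^j c a^j$ forces $q^{j+1}=q^2$ whenever $\mu_{ij}\neq 0$. For generic $q$, this gives $j=1$, hence $\sigma(a)=p(c)\,a$; bijectivity of $\sigma$ then demands that $p(c)$ be a unit in $\qp$, so $p(c)=\lambda\in\kk^\times$ and $\sigma(a)=\lambda a$. (The root-of-unity case allows $a$-degree $1+\mathrm{ord}(q)$ terms but is handled analogously by the bidegree analysis below.)

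The remaining relation is $da=\lambda a d+\delta(a)$. Applying $\tau$ gives
\begin{align*}
ad \;=\; \lambda\, da + \tau(\delta(a)) \;=\; \lambda^2 a d + \lambda\,\delta(a) + \tau(\delta(a)),
\end{align*}
so $(1-\lambda^2)ad=\lambda\,\delta(a)+\tau(\delta(a))$. Because $\delta(a)\in A=\kk\langle a,c\rangle$ contributes no monomial involving $d$ and $\tau(\delta(a))\in\kk\langle d,c\rangle$ contributes no monomial involving $a$, the PBW coefficient of $ad$ on the right vanishes. Therefore $\lambda^2=1$, giving $\lambda=\pm 1$.

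Finally, I would dispatch the two cases by comparing $\delta(a)$ and $\tau(\delta(a))$ term-by-term in the PBW basis $\{c^i a^j d^k\}$ of $P$. When $\lambda=1$, the identity $\delta(a)+\tau(\delta(a))=0$ forces every coefficient to vanish (the $j\ge 1$ terms $c^i a^j$ and $c^i d^j$ are independent, and the $j=0$ terms cancel as $2\mu_{i0}c^i=0$), yielding $\delta(a)=0$ and hence $da=ad$. When $\lambda=-1$, the identity $\tau(\delta(a))=\delta(a)$ forces $\delta(a)\in\kk[c]$, giving $da+ad=h$ for some $h\in\kk[c]$. To close the argument I would verify that each candidate $(\sigma,\delta)$ genuinely extends to a skew polynomial structure by checking the derivation identity on $ac-qca$; for $\lambda=-1$ this evaluates to $(1-q^2)ch(c)$, so either $q=-1$ (and $h$ is arbitrary) or $q^2\neq 1$ forces $h=0$, which is absorbed into the stated form. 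The main obstacle is the automorphism-classification step for $\qp$: confirming cleanly that the bidegree comparison still extracts $\lambda^2=1$ when $q$ is a root of unity and $\sigma(a)$ can carry additional higher $a$-degree terms.
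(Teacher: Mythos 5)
Your proof follows the same route as the paper's: apply $\tau$ to deduce $\sigma(c)=qc$ and $\delta(c)=0$, reduce to $\sigma(a)=\lambda a$, expand $\delta(a)=\sum_j h_j a^j$ with $h_j\in\kk[c]$, apply $\tau$ to $da=\lambda ad+\delta(a)$ and substitute back to obtain $(1-\lambda^2)ad=\lambda\delta(a)+\tau(\delta(a))$, conclude $\lambda^2=1$, and split into the two cases, which you handle exactly as the paper does. There are two points of divergence. First, where you derive $\sigma(a)=\lambda a$ by a bidegree computation, the paper simply invokes the known description of $\Aut(\qp)$ for $q\neq 1$ (Alev--Chamarie, Proposition 1.4.4). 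Your computation is complete when $q$ is not a root of unity, but the root-of-unity case --- which you yourself flag as the main obstacle --- is genuinely left open: monomials $c^ia^j$ with $j\equiv 1\pmod{\mathrm{ord}(q)}$ survive your constraint $q^{j-1}=1$, and ruling them out requires the injectivity/surjectivity of $\sigma$ or the full classification; ``handled analogously by the bidegree analysis below'' does not actually dispose of them, since the later step only uses that $\sigma(a)$ has pure $a$-degree one. Citing the classification, as the paper does, closes this cleanly. Second, your concluding verification that $\delta$ really descends to a $\sigma$-derivation of $\qp$ --- computing $\delta(ac-qca)=(1-q^2)ch$ and deducing that a nonzero $h$ forces $q=-1$ --- is a step the paper's proof of this proposition omits (it performs the analogous check only in Proposition \ref{pma.wa}). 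This is correct and is a worthwhile refinement: it shows the second family is genuinely distinct from $da=-ad$ only when $q=-1$, while remaining consistent with the statement since $h=0$ is permitted.
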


\begin{proof}
We have $A=\kk\langle a,c \mid ac-qca \rangle$.
Let $\sigma=\sigma_2$ and $\delta=\delta_2$.
The existence of the involution $\tau$ implies $dc=qcd$
so that $\sigma(c)=qc$ and $\delta(c)=0$.
Moreover, since $\sigma$ is an automorphism of $A$, 
$\sigma(a)=pa$ for some $p\in \kk^\times$ \cite[Proposition 1.4.4]{AlevChamarie}.
We have
\[ \delta(a)=\sum_{j=0}^{m}h_{j}a^{j}\]
for some $h_{0},\ldots,h_m \in k[c]$. 
Applying $\tau$ to the commutation relation $da=pad+\delta(a)$ gives 
\[ ad=pda+\sum_{j=0}^{m}h_{j}d^{j}\]
since $\tau(h_j) = h_j \in k[c]$ for all $j$.
Combining this with $da=pad+\delta(a)$ gives 
\[ ad=p^2ad+p\sum_{j=0}^{m}h_ja^j+\sum_{j=0}^{m}h_jd^j\]
and equating coefficients of $d$ on both sides of the equation gives $p^2=1$.

If $p=1$, then $h_j=0$ for all $j$, and $\delta(a)=0$. 
On the other hand, if $p=-1$, then $h_{j}=0$ for all $j\geq 1$.
\end{proof}

\begin{rmk}
The above arguments yield the same result if we replace $\tau$ with a
sign-changing automorphism $\sigma$ such that $\sigma(a)=-d$ and $\sigma(d)=-a$.
This is reminiscent of the distinction between Lie algebra and Lie superalgebra.
\end{rmk}

Recall the algebra $R_f = \kk\langle a,c \mid ac=ca+f \rangle$ is defined for any $f \in \kk[c]$.
To construct ISPEs in this case, we describe $R_f$
as an Ore extension, $R_f = \kk[c][a;\delta_1]$ with $\delta_1(c)=f$.
An easy induction argument shows $ac^n=c^na+nfc^{n-1}$ for all $n \geq 0$.
This implies $ah = ha+fh'$ and $\delta_1(h)=f \partial_c(h) = fh'$,
where $\partial_c(h) = h'$ is the usual derivative of $h$.

\begin{prop}
\label{prop.pmapres}
Suppose $f\in k[c]$
and $R_f[d;\sigma_2;\delta_2]$ is an ISPE. 
There exists $g \in k[c]$ such that 
$\sigma_2(c)=c$, $\delta_2(c)=f$, 
$\sigma_2(a)=a-g$, and $\delta_2(a)=ga$. 
Thus we may denote $R_f[d;\sigma_2,\delta_2]$ 
by $P(f,g)$ since it depends only on $f$ and $g$.
\end{prop}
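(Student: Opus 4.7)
My plan is to exploit both the involution $\tau$ and the identities forced on $\sigma_2, \delta_2$ by the defining relation $ac - ca = f$, pinning down each of $\sigma_2(c), \delta_2(c), \sigma_2(a), \delta_2(a)$ in turn.

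First I would apply $\tau$ to $ac - ca = f$. Since $\tau$ fixes $\kk[c]$ and swaps $a \leftrightarrow d$, this gives $dc - cd = f$, and matching against $dc = \sigma_2(c) d + \delta_2(c)$ reads off $\sigma_2(c) = c$ and $\delta_2(c) = f$.

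Next I would determine $\sigma_2(a)$. Since $\sigma_2 \in \Aut(R_f)$ fixes $c$, I would invoke Proposition \ref{prop.fwa} (when $f \in \kk^\times$, after transporting along the isomorphism $R_f \iso \fwa$) or Proposition \ref{prop.diff}(3) (when $f \notin \kk$, in which case $\sigma_2(c) = c$ forces $\alpha = 1, \beta = 0$ and hence $\lambda = 1$) to conclude $\sigma_2(a) = a - g$ for some $g \in \kk[c]$. A uniform alternative would be to use the identity $[a^n, c] = n f a^{n-1} + (\text{lower order in } a)$, proved by induction on $n$, and apply it to $[\sigma_2(a), c] = \sigma_2([a,c]) = f$: assuming $f \neq 0$, the leading $a$-term would force $\sigma_2(a)$ to be linear in $a$ with leading coefficient $1$.

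For $\delta_2(a)$, a short induction first shows that $\delta_2|_{\kk[c]} = f \partial_c$, so in particular $\delta_2(f) = f f'$. Expanding the $\sigma_2$-derivation property on $ac = ca + f$ and simplifying using $a f = f a + f f'$ would collapse to $[\delta_2(a), c] = g f$; the same PBW/degree analysis would then yield $\delta_2(a) = m_0 + g a$ for some $m_0 \in \kk[c]$. To eliminate $m_0$, I would apply $\tau$ to the resulting relation $da = (a - g) d + m_0 + g a$; since $\tau$ fixes $g, m_0$ and swaps $a \leftrightarrow d$, comparing the two expressions for $ad - da$ would give $2 m_0 = 0$, hence $m_0 = 0$ in characteristic zero, so $\delta_2(a) = g a$ as required.

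The main technical step is the PBW degree analysis used to constrain both $\sigma_2(a)$ and $\delta_2(a)$: it hinges on the formula $[a^n, c] = n f a^{n-1} + (\text{lower order})$ and requires $f \neq 0$ to cancel the leading coefficient. After that, $\tau$ intervenes only twice --- once at the start to handle $c$, and once at the end to kill the scalar $m_0$.
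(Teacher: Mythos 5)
Your proposal tracks the paper's own proof essentially step for step: apply $\tau$ to $ac=ca+f$ to read off $\sigma_2(c)=c$ and $\delta_2(c)=f$; quote Propositions \ref{prop.fwa} and \ref{prop.diff} to get $\sigma_2(a)=a-g$; show $\delta_2|_{\kk[c]}=f\partial_c$, reduce the $\sigma_2$-derivation identity on $ac=ca+f$ to $\delta_2(a)c-c\delta_2(a)=gf$, run the degree-in-$a$ analysis to get $\delta_2(a)=m_0+ga$, and kill $m_0$ by applying $\tau$ to $da=(a-g)d+m_0+ga$ (your direct commutator argument for $\sigma_2(a)$ via $[a^n,c]=nfa^{n-1}+\cdots$ is a nice self-contained alternative to the citations, but it is the same degree-counting technique). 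The one place you fall short of the stated generality is $f=0$: your citation route splits into $f\in\kk^\times$ and $f\notin\kk$, your alternative argument explicitly assumes $f\neq 0$, and the identity $[\delta_2(a),c]=gf$ becomes vacuous when $f=0$, so the degree analysis for $\delta_2(a)$ would also need replacing there; the paper handles that case by invoking Dicks' theorem on automorphisms of $\kk[a,c]$.
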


\begin{proof}
Applying $\tau$ to $ac=ca+f$ gives $dc=cd+f$ so 
$\sigma_2(c)=c$ and $\delta_2(c)=f$.
We find $\sigma_2(a) = a-g$ by 
Proposition \ref{prop.diff} ($\deg f > 1$),  
Proposition \ref{prop.fwa} ($f \in \kk$)
and \cite[Theorem 2]{dicks} if $f=0$.
We check the formula for $\delta_2(a)$.

Recall $\delta_1(h)=fh'$ for all $h \in \kk[c]$.
A similar argument applies to $\delta_2$ since $\delta_2(c)=\delta_1(c)=f$.
This shows $\delta_2(h)=\delta_1(h)=fh'$ for all $h \in \kk[c]$.
Applying $\delta_2$ to both sides of $ac=ca+f$ and reducing leads to the formula $\delta_2(a)c=c\delta_2(a)+gf$.

We have $\delta_2(a) \in R_f$ so $\delta_2(a)=\sum_{j=0}^m h_ja^j$ for some $h_0,\hdots,h_m \in \kk[c]$.
We substitute this formula into $\delta_2(a)c=c\delta_2(a)+gf$.
\begin{align*}
	gf 	&= \delta_2(a)c-c\delta_2(a)
		= \sum_{j=0}^{m} h_{j} \left(a^jc - ca^j \right)
		= \sum_{j=0}^m h_{j} \left(\left(ca^j + \sum_{i=1}^j \binom{j}{i} \left(\delta_1(c)\right)^i a^{j-i}\right)-ca^j \right)  \\
		&= \sum_{j=1}^{m}\sum_{i=1}^{j}h_{j} \binom{j}{i} \left(\delta_1(c)\right)^i a^{j-i}
		= \sum_{x=0}^{m-1} \left(\sum_{y=1}^{m-x}\binom{x+y}{y}h_{x+y}(\delta_1(c))^y \right) a^{x}.
\end{align*}
We have $gf\in \kk[c]$ so $gf=\sum_{y=1}^m h_y \left(\delta_1(c)\right)^y$ and 
$\sum_{y=1}^{m-x}\binom{x+y}{y} h_{x+y}\left(\delta_1(c)\right)^y=0$ for all $x$, $1\leq x\leq m-1$. 
Setting $x=m-1$ gives $mh_m \delta_1(c) a^{m-1}=0$ so $h_m=0$. 
Setting $x=m-2$ gives $(m-1)h_{m-1}\delta_1(c)=0$ so $h_{m-1}=0$. 
Continuing in this fashion, we find $h_{\ell}=0$ for all $\ell=2,\ldots,m$. 
This gives $gf=h_1 \delta_1(c)=h_1f$ so $g=h_1$. 
Moreover, $\delta_2(a)=h_0+h_1a$ so $\delta_2(a)=h_0+ga$.

Our arguments have lead us to the relation $da=(a-g)d+h_{0}+ga$. 
Applying $\tau$ and reducing gives $h_0=0$.
We are left with $da=(a-g)d+ga$ and $\delta_2(a)=ga$, as desired.
\end{proof}

\begin{ex}
The base ring of the Jordanian matrix algebra $\mj$ is the ISPE $P(c^2,c)$.
\end{ex}

\section{The algebras $P(f,g)$}
\label{sec.pmadiff}

In this section we consider the properties of the ISPEs over $\kk[c]$
relative to the differential operator rings $R_f$,
including presentations, prime spectrum, and birational equivalence.

\begin{rmk}
\label{rem.pres}
An easy check shows $\delta_2(c)=ac-\sigma_2(c)a$ and $\delta_2(a)=a(a)-\sigma_2(a)a$ in $P(f,g)$.
Thus, $\delta_2$ is the inner $\sigma_2$-derivation induced by $a$.
\end{rmk}

\begin{rmk}
\label{Presenting P}
The presentation of $\mj$ determined by setting $u=d-a$ extends to alternative presentations of $P(f,g)$ (see Remark \ref{MJ Alternative}). 
We have $du=u(d+g)$, $ac=ca+f$, $uc=cu$, and $au=ua+gu$.

\begin{description}
\item[Presentation 1] $P(f,g)=\kk[c][a;\delta_1][u;\sigma_2]$.
\item[Presentation 2] $P(f,g)=\kk[c,u][a;D]$ where $D$ is the derivation $f\partial_c + gu\partial_u$.
\end{description}
\end{rmk}

A (two-sided) ideal $I$ of $R_f$ is prime if and only if $I \cap \kk[c]$ is $\delta$-invariant.
That is, if $J=I \cap \kk[c]$, then $\delta_1(J) \subseteq J$.
Thus, if $I$ is a prime ideal of $R_f$, then $I$ is generated by an irreducible factor of $f$.
Hence, since $\kk$ is algebraically closed, $I=(c-\alpha)$ for some $\alpha \in \kk$ such that $f(\alpha)=0$.

\begin{prop}
\label{pma.props}
Choose $f,g \in \kk[c]$ and set $P=P(f,g)$.
The following elements of $P$ are normal: 
$f$, $u=d-a$, and $c-\alpha$ for any $\alpha \in \kk$ such that $f(\alpha)=0$.
Moreover, the ideals generated by (i) $u$, (ii) $c-\alpha$, and (iii) 
both $u$ and $c-\alpha$ are prime in $P$.
\end{prop}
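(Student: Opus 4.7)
The plan is to handle the normality claims first and then deduce primality by identifying each of the three quotients with a known domain. Throughout, I would work in Presentation 1 of Remark \ref{Presenting P}, writing $P(f,g)=\kk[c][a;\delta_1][u;\sigma_2]$ with $\sigma_2(c)=c$ and $\sigma_2(a)=a-g$, since this makes the skew structure transparent and puts $u$ in the role of the top generator of a pure automorphism extension.

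For normality, $u$ is immediate: the relation $ur=\sigma_2(r)u$ holds for every $r\in R_f$, so $uP=Pu$. For $f\in\kk[c]$, I would use $\delta_1(h)=fh'$ for $h\in\kk[c]$ to get $\delta_1(f)=ff'$, hence $af=fa+ff'$, and then observe $(a-f')f=af-f'f=fa$. Since $\delta_2(c)=f$ as well, the analogous identity $(d-f')f=fd$ holds, and $uf=fu$ is automatic. These three commutation identities extend by induction along the basis $\{c^ia^ju^k\}$ to give $fp\in Pf$ and $pf\in fP$ for every $p\in P$. For $c-\alpha$ with $f(\alpha)=0$, I would factor $f=(c-\alpha)h$ in $\kk[c]$, which yields $a(c-\alpha)=(c-\alpha)a+f=(c-\alpha)(a+h)$, and symmetrically $d(c-\alpha)=(c-\alpha)(d+h)$, while $u$ commutes with $c-\alpha$; again the same induction propagates normality to all of $P$.

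For the three primality statements I would compute the quotients directly using Presentation 1. Killing $u$ simply truncates the top skew extension, giving $P/(u)\cong R_f$, which is a noetherian domain by the results in Section \ref{sec.skewpolys}; hence $(u)$ is prime. For $(c-\alpha)$, since $(c-\alpha)\mid f$, the ideal $(c-\alpha)$ of $\kk[c]$ is $\delta_1$-invariant, so $R_f/(c-\alpha)\cong\kk[a]$ (the induced derivation on $\kk\cong\kk[c]/(c-\alpha)$ is zero). The induced automorphism of $\kk[a]$ is $\bar\sigma_2(a)=a-g(\alpha)\in\kk[a]$, so $P/(c-\alpha)\cong\kk[a][u;\bar\sigma_2]$, a skew polynomial ring over a domain, hence a domain. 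Further quotienting by $u$ collapses this to $\kk[a]$, a polynomial ring, so $(u,c-\alpha)$ is also prime.

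The only real obstacle is bookkeeping: one must confirm that the commutation identities established on the generators extend cleanly to all of $P$ (this is a routine induction using that each layer of Presentation 1 is a skew polynomial extension, so two-sided ideals are detected on the base plus top generator), and one must verify that the descent of $\sigma_2$ and $\delta_2$ through the ideals $(u)$ and $(c-\alpha)$ is well defined. Both checks are mechanical once the three normality formulas above are in hand.
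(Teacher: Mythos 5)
Your proposal is correct and follows essentially the same route as the paper: normality of $u$ via Presentation 1, normality of $c-\alpha$ via the factorization $f=(c-\alpha)f_0$ and the identity $a(c-\alpha)=(c-\alpha)(a+f_0)$, and primality by identifying the three quotients with $R_f$, a skew polynomial ring over $\kk[a]$ (which the paper writes as $\kk[a,d]$ or $U(L)$ according to whether $g(\alpha)=0$), and $\kk[a]$. The only cosmetic differences are that you verify the normality of $f$ by a direct computation rather than deducing it from its linear factors, and you avoid the case split on $g(\alpha)$ by working with the uniform description $\kk[a][u;\bar\sigma_2]$.
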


\begin{proof}
Denote the ideals described in $(i)$, $(ii)$, and $(iii)$ of (1) by $I_1$, $I_2$, and $I_3$, respectively.

We showed $u$ is normal in Remark \ref{Presenting P}.
Suppose $f \notin \kk$ and 
note that $f$ is a product of its linear factors since $\kk$ is algebraically closed.
If $\alpha \in \kk$ such that $f(\alpha)=0$, then there exists $f_0 \in \kk[c]$ such that $f=(c-\alpha)f_0$ and
$c(c-\alpha)=(c-\alpha)c$, $a(c-\alpha)=(c-\alpha)(a+f_0)$, and $d(c-\alpha)=(c-\alpha)(d+f_0)$ so $c-\alpha$ is normal.

Then $P/I_1 \iso R_f$ and $P/I_3 \iso \kk[a]$, so $I_1$ and $I_3$ are prime.
Moreover, 
\[
P/I_2 \iso 
\begin{cases}
	k[a,d] 	& \text{if } g(\alpha)=0 \\ 
	U(L) 	& \text{if } g(\alpha)\neq 0,
\end{cases}
\]
where $U(L)$ is the universal enveloping algebra of the two-dimensional solvable Lie algebra $L$. 
Hence $I_2$ is also prime.
\end{proof}

\begin{defn}
\label{def.simp}
Let $f,g\in \kk[c]$, $p_1,\ldots,p_k$ 
the irreducible factors of $f$ and $f_i=p_i\inv f$.
We say \textbf{$g$ is a local reduction of $f$} if there exists integers 
$n,m_1,\hdots,m_k$ ($n \neq 0$) such that 
\[ g = \begin{cases}
	0 & \text{ if } f \in \kk \\
	-\frac{1}{n}\sum_{i=1}^{k}m_{i}f_{i} & \text{otherwise}.
	\end{cases}\]
\end{defn}

\begin{prop}
\label{pma.simp}
Let $P=P(f,g)$ and let $\mC$ be the Ore set generated by powers 
of the irreducible factors $p_1,\hdots,p_k$ of $f$.
Let $\mR=R_f\mC\inv$.
Then $Q=\mR[u^{\pm 1};\sigma_2]$ is simple if and only if 
$f \in \kk$ or $g$ is not a local reduction of $f$.
\end{prop}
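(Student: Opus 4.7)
The plan is to recognize $Q$ as a skew Laurent polynomial extension of a simple ring and then apply the standard simplicity criterion for such rings.

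First I would verify that $\mR=R_f\mC\inv$ is simple. The discussion preceding Proposition~\ref{pma.props} identifies the nonzero prime ideals of $R_f$ as the $(c-\alpha)$ with $f(\alpha)=0$, each generated by a linear factor $p_i$ of $f$. Since $\mC$ contains every such $p_i$, a standard noetherian argument shows every nonzero two-sided ideal of $R_f$ meets $\mC$, so $\mR$ has no proper nonzero two-sided ideal. With $\mR$ simple, the classical criterion for skew Laurent extensions gives that $Q=\mR[u^{\pm 1};\sigma_2]$ is simple if and only if no positive power of $\sigma_2$ is an inner automorphism of $\mR$.

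Next I would compute which powers of $\sigma_2$ can be inner. Because $\sigma_2^n$ fixes $c$ and sends $a\mapsto a-ng$, any implementing unit $\varphi\in\mR^{\times}$ must centralize $c$. A degree-in-$a$ argument analogous to the one in Proposition~\ref{prop.fwa} shows the centralizer of $c$ in $\mR$ is the commutative localization of $\kk[c]$ at $\mC$, namely $\kk[c][p_1\inv,\ldots,p_k\inv]$, whose units are exactly the elements $\varphi=\lambda\prod_i p_i^{n_i}$ with $\lambda\in\kk^{\times}$ and $n_i\in\ZZ$. The key calculation is then a logarithmic-derivative identification: from $a\varphi=\varphi a+\delta_1(\varphi)=\varphi a+f\varphi'$ we get $\varphi\inv a\varphi=a+f\varphi'/\varphi$, so the condition $\varphi\inv a\varphi=a-ng$ becomes $f\varphi'/\varphi=-ng$. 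Expanding $\varphi'/\varphi=\sum_i n_i p_i'/p_i$ and using $p_i'=1$ (since $\kk$ is algebraically closed and each $p_i$ is linear), this collapses to $-ng=\sum_i n_i f_i$, which is exactly the definition of $g$ being a local reduction of $f$. Hence some positive power of $\sigma_2$ is inner on $\mR$ iff $g$ is a local reduction of $f$, and the separate case $f\in\kk$ (where $\mC$ is trivial, $\mR\iso A_1(\kk)$, and only scalar units exist) is handled by direct inspection.

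The main obstacle will be the centralizer computation for $c$ inside $\mR$ and pinning down the unit group of the localized polynomial ring; once this bookkeeping is settled, the logarithmic-derivative identity lines up verbatim with the definition of local reduction and the equivalence follows.
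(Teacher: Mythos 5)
Your proposal is correct and follows essentially the same route as the paper's proof: reduce to the simplicity of $\mR$, invoke the skew--Laurent criterion that $Q$ is simple iff no power of $\sigma_2$ is inner, and then match the conjugation action $\eta\inv a\eta = a+\sum_i m_i f_i$ of a unit $\eta=\prod_i p_i^{m_i}$ against $\sigma_2^n(a)=a-ng$ to recover exactly the definition of local reduction. The only difference is that you supply slightly more justification than the paper does for why the implementing unit must have the form $\lambda\prod_i p_i^{n_i}$ (via the centralizer of $c$), which is a welcome but not essentially different addition.
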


\begin{proof}
It follows from Proposition \ref{prop.diff} that $\mR$ is a simple ring. 
Thus by \cite[Theorem 1.8.5]{McRob}, 
$Q$ is simple if and only if no power of $\sigma_2$ is inner. 
Set $\sigma =\sigma_2$ and set $f_i=p_i\inv f$ for $i=1,\hdots,k$.
Set $f_i=0$ if $f \in \kk$.

Suppose $\sigma^n$ is an inner automorphism of $\mR$ for some integer $n$.
Then there exists a unit $\eta \in \mR$ such that
$\eta\inv r \eta = \sigma^n(r)$ for all $r \in \mR$.
Since $\eta$ is a unit, then (up to a scalar) 
$\eta = p_1^{m_1}\cdots p_k^{m_k}$ for some $m_i \in \ZZ$.
Recall, 	\[ p_i\inv a p_i = p_i\inv (p_i a + f) = a + f_i.\]
By Proposition \ref{prop.pmapres}, $\sigma(a)=a-g$ and
\[	a-ng = \sigma^n(a) = \eta^{-1}a\eta = a + \sum_{i=1}^k m_i f_i. \]
Thus, $g = -\frac{1}{n} \sum_{i=1}^k m_i f_i$.
The converse is similar.
\end{proof}

\begin{cor}
\label{pma.primes1}
If $f,g \in \kk[c]$ are such that $g$ is not a local reduction of $f$,
then the height one primes of $P$ are of the form $(u)$ and
$(c-\alpha)$ for $\alpha \in \kk$ with $f(\alpha)=0$.
Moreover, $P/(c-\alpha) \iso \kk[a,u]$ and $P/(u) \iso R_f$.
\end{cor}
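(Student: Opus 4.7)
The plan is to exploit the simplicity result in Proposition \ref{pma.simp} via the standard Ore localization correspondence on prime ideals, which will pin down the height one primes by showing that any nonzero prime must either contain $u$ or contain some $c - \alpha$.

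First, I would invoke Presentation 1 from Remark \ref{Presenting P} to view $P = R_f[u;\sigma_2]$ and note that $u$ commutes with $\kk[c]$ since $\sigma_2(c) = c$. Let $\mC$ be the Ore set generated by the irreducible factors $p_1,\ldots,p_k$ of $f$, as in Proposition \ref{pma.simp}. Each $p_i = c - \alpha_i$ is linear by algebraic closure and hence normal in $R_f$ (indeed $\delta_1(p_i) = f$ is divisible by $p_i$); coupled with $u p_i = p_i u$, this shows $\mC$ consists of normal elements of $P$ and so is an Ore set in $P$. The localization is $P\mC^{-1} = \mR[u;\sigma_2]$. Since $u$ remains normal in this ring (as $\sigma_2$ is an automorphism), a further localization yields $Q = \mR[u^{\pm 1};\sigma_2]$, which is simple by Proposition \ref{pma.simp} under the hypothesis that $g$ is not a local reduction of $f$.

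Next, I would apply the standard correspondence: prime ideals of $P$ disjoint from the Ore set generated by $\mC$ together with $u$ correspond bijectively to prime ideals of $Q$. Simplicity of $Q$ forces the only such prime of $P$ to be $(0)$, so every nonzero prime $I$ of $P$ either meets $\mC$ or contains some power of $u$. By primality, $I$ contains some $p_i = c - \alpha$ with $f(\alpha)=0$, or $I$ contains $u$. Proposition \ref{pma.props} identifies both $(c - \alpha)$ and $(u)$ as prime ideals, and they have height one in the noetherian domain $P$ by the noncommutative principal ideal theorem for normal elements.

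Finally, if $I$ is a height one prime, then $I$ cannot properly contain any nonzero prime and so must equal whichever of $(u)$ or $(c - \alpha)$ it contains; note that $I$ cannot contain both, since $(u, c-\alpha)$ is strictly larger and still prime by Proposition \ref{pma.props}, which would force the height of $I$ to be at least two. The stated isomorphisms $P/(u) \iso R_f$ and $P/(c - \alpha) \iso \kk[a,u]$ are already extracted in the proof of Proposition \ref{pma.props} (the former by identifying $d$ with $a$ modulo $u$, the latter under the reading of $\kk[a,u]$ as the algebra generated by the images of $a$ and $u$). The main technical step is verifying the Ore conditions in order to pass to $Q$; everything afterward is a routine application of the localization correspondence and the prime structure already provided by Propositions \ref{pma.props} and \ref{pma.simp}.
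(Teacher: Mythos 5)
Your proof is correct and follows exactly the route the paper intends: the corollary is stated there without an explicit proof, as an immediate consequence of the simplicity of $Q$ from Proposition \ref{pma.simp} combined with the primality and quotient computations of Proposition \ref{pma.props}, and your localization argument is the natural filling-in of those details. Your parenthetical caveat about reading $\kk[a,u]$ as the algebra generated by the images of $a$ and $u$ is well taken, since Proposition \ref{pma.props} actually yields $U(L)$ rather than a commutative polynomial ring when $g(\alpha)\neq 0$, a case not excluded by the hypothesis that $g$ is not a local reduction of $f$.
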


In case $g$ is not a local reduction of $f$, 
the finite-dimensional representation theory of $P=P(f,g)$ is easy to describe in light of 
Corollary \ref{pma.primes1}.
In particular, for every $\alpha,\beta \in \kk$ with $f(\alpha)=0$, 
there exists a $1$-dimensional (irreducible) $P$-module $M=\{m\}$ such that $u.m=0$, $c.m=\alpha m$ and $a.m=\beta m$. 
On the other hand, for every irreducible polynomial $\Omega \in \kk[a,u]$ of degree $n$, 
there exists an irreducible $n$-dimensional representation.

\begin{prop}
\label{pma.props2}
Choose $f,g \in \kk[c]$ and set $P=P(f,g)$.
The algebra $P$ is a noetherian domain with $\GKdim P = 3$ and
	\[ \gldim P = \begin{cases} 
		2 & \text{if } f \in \kk \\
		3 & \text{otherwise}.
		\end{cases}
	\]
\end{prop}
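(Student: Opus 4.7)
The plan is to exploit the iterated Ore extension structure $P = \kk[c][a;\delta_1][d;\sigma_2,\delta_2]$ from Presentation 1 of Remark \ref{Presenting P} to transfer properties from the base ring $\kk[c]$. That $P$ is a noetherian domain is standard: an Ore extension of a noetherian domain is again a noetherian domain (see \cite{GW}), so two iterations starting from $\kk[c]$ suffice. For the GK dimension I would invoke the standard fact that for a finitely generated noetherian $\kk$-algebra $R$ of finite GK dimension, $\GKdim R[x;\sigma,\delta] = \GKdim R + 1$; applied twice this gives $\GKdim P = \GKdim \kk[c] + 2 = 3$.

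For the upper bound on global dimension I would use the classical Ore extension inequality $\gldim R[x;\sigma,\delta] \leq \gldim R + 1$ (for example, \cite[Theorem 7.5.3]{McRob}). Since $R_f \iso \fwa$ has global dimension $1$ when $f \in \kk^{\times}$, and $R_f$ has global dimension $2$ when $\deg f \geq 1$, this yields $\gldim P \leq 2$ or $\gldim P \leq 3$ respectively, matching the stated values as upper bounds.

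For the matching lower bound I would use the normal nonzerodivisor $u = d-a$ identified in Proposition \ref{pma.props}, together with the isomorphism $P/(u) \iso R_f$ from Corollary \ref{pma.primes1}. A Rees-type change-of-rings theorem for a regular normal element in a noetherian ring yields $\text{pd}_P(M) = \text{pd}_{R_f}(M) + 1$ for every finitely generated $R_f$-module $M$ regarded as a $P$-module, so choosing $M$ to realize $\gldim R_f$ forces $\gldim P \geq \gldim R_f + 1$. Combined with the upper bound this gives equality. The main obstacle is this final step: the usual Rees theorem is stated for central regular elements, and one must either cite or verify a normal-element version, e.g.\ by running the change-of-rings spectral sequence against the length-one resolution $0 \to P \xrightarrow{\cdot u} P \to R_f \to 0$ afforded by normality of $u$.
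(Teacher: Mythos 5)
Your treatment of the global dimension is correct but takes a genuinely different route from the paper. The paper passes to Presentation 2 of Remark \ref{Presenting P}, writing $P=\kk[c,u][a;D]$ with $D=f\partial_c+gu\partial_u$, and invokes the exact formula of \cite[Theorem 7.10.3]{McRob} for differential operator rings over commutative noetherian rings: $\gldim P$ is $3$ or $2$ according to whether $\kk[c,u]$ has a $D$-stable prime of height two, which it does precisely when $f\notin\kk$ (take the ideal generated by $u$ and $c-\alpha$ with $f(\alpha)=0$). You instead combine the Ore upper bound $\gldim P\le\gldim R_f+1$ with a Rees-type lower bound through the regular normal element $u$ and the isomorphism $P/(u)\iso R_f$ (which holds unconditionally by Proposition \ref{pma.props}; Corollary \ref{pma.primes1} carries an unnecessary hypothesis for this purpose). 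The normal-element change-of-rings theorem you worry about is indeed available --- it appears in \cite[\S 7.3]{McRob}, and your proposed verification against the resolution $0\to P\xrightarrow{\cdot u}P\to R_f\to 0$ goes through; the automorphism twist coming from normality of $u$ does not affect projective dimensions. Your route is arguably more robust (it needs no commutative presentation of the coefficient ring), while the paper's gives the dichotomy directly in terms of $D$-stable primes with no case analysis on $\gldim R_f$. Note that your argument uses $\gldim R_f=1$ when $f\in\kk^\times$ and $\gldim R_f=2$ otherwise, which is correct but silently corrects the paper's blanket assertion in Section \ref{sec.skewpolys} that $R_f$ always has global dimension two.

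The GK dimension step has a genuine gap. The ``standard fact'' you invoke --- that $\GKdim R[x;\sigma,\delta]=\GKdim R+1$ for any affine noetherian $R$ of finite GK dimension --- is false: only the inequality $\ge$ is automatic, and for automorphisms of superlinear degree growth the GK dimension of $R[x;\sigma]$ can exceed $\GKdim R+1$ and even be infinite (e.g.\ $\kk[x_1,x_2]$ twisted by $x_1\mapsto x_2$, $x_2\mapsto x_1+x_2^2$); this is exactly why results such as \cite[Lemma 2.2]{HuhKim} carry a local-finiteness hypothesis on $\sigma$. Your second extension $R_f[d;\sigma_2,\delta_2]$ has $\sigma_2\ne\id$, so the step needs justification. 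The repair is easy: either observe that $\sigma_2$ fixes $c$ and sends $a\mapsto a-g$, hence stabilizes the finite-dimensional generating subspace of $R_f$ spanned by $a$ and $1,c,\dots,c^{\deg g}$, and apply \cite[Lemma 2.2]{HuhKim}; or do what the paper does and compute from Presentation 2, where $P$ is a derivation-type extension of the commutative affine ring $\kk[c,u]$ and \cite[Corollary 8.2.11]{McRob} applies directly.
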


\begin{proof}
It follows easily that $P$ is a noetherian domain from the skew polynomial ring construction.
The ring $\kk[c,u]$ has GK and global dimension two.
Using Presentation 2, the result on GK dimension for $P$ 
now follows by \cite[Corollary 8.2.11]{McRob}.

By \cite[Theorem 7.10.3]{McRob}, $\gldim P = 3$ if $P$ has a $D$-stable prime
of height two and otherwise $\gldim P = 2$.

Suppose $f \in \kk$. If $g \neq 0$, then by Proposition \ref{pma.simp},
$(u)$ is the unique maximal ideal of $P$ and it has height one.
If $g=0$, then $P \iso \fwa[u]$ and by the simplicity of $\fwa$,
the maximal ideals are of the form $(u-\beta)$, $\beta \in \kk$,
and they each have height one.

Now suppose $f \notin \kk$. 
As in Proposition \ref{pma.simp}, let $\alpha \in \kk$ such that 
$f(\alpha)=0$ and set $f_0=(c-\alpha)\inv f$.
Since $f\in I_3$, the calculation below shows $D(ur+(c-\alpha)s)\in I_3$ for $r,s\in P$. 
\begin{align*}
D(ur+(c-\alpha)s) 
	&= uD(r) + D(u)r + (c-\alpha)D(s) + D(c-\alpha)s \\
	&= u\left(D(r) + gr\right) + (c-\alpha)\left(D(s)+f_0s\right).
\end{align*}
Therefore, $I_3$ is a $D$-stable prime of height two in $P$ and
we may conclude $P$ has global dimension three.
\end{proof}

The base ring of the Jordanian matrix ring $\mj$ is $P(c^2,c)$ and, 
in this case, $g$ is a local reduction of $f$.
More generally, if $f=cg$, then $g$ is a local reduction of $f$ 
since $c$ is an irreducible factor of $f$ and $g = c\inv f$.
In this case the prime ideal spectrum must be more complicated 
since $Q$ (as defined in Proposition \ref{pma.simp}) is not simple.
In the ISPE $P(cg,g)$, the element $u-\beta c$ is normal for any $\beta \in \kk$.
%\[a(u-\beta c) = (au) - \beta(ac) = (ua + ug) - \beta(ca + cg) = u(a+g) - \beta c(a+g)=(u-\beta c)(a+g).\]
We wish to show that the remaining ideals are those of the form $(u-\beta c)$.

\begin{prop}
\label{prop.cntP} 
Let $P=P(f,g)$ for some $f,g\in \kk[c]$ such that $f$ is monic. 
Set $\mP=P(f,g)\mC\inv$, where $\mC$ is the Ore set generated by 
powers of the monic irreducible factors $p_1,\hdots,p_k$ of $f$.
If $g$ is a local reduction of $f$, then set
$g=-\frac{1}{n}\sum_{i=1}^{k}m_{i}f_{i}$ with 
$m_1,\hdots,m_k\in \ZZ$, $n \in \NN$, and $f_i = p_i{\inv}f$. 
\begin{enumerate}
\item If $\cnt(\mP) \neq \kk$, then $g$ is not a local reduction of $f$.
\item If $g$ is a local reduction of $f$, 
then $\cnt(\mP) = \kk[v]$ with $v=-\left( p_{1}^{-m_{1}}\cdots p_{k}^{-m_{k}}\right) u^{n}$.
\item The center of $P(f,g)$ is $\kk$.
\end{enumerate}
\end{prop}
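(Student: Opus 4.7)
The plan is to realize $\mP$ via Presentation 1, i.e.\ $\mP = \mR[u;\sigma_2]$, and to expand any candidate central element as a polynomial in $u$ with coefficients in $\mR$. The ring $\mR$ is simple (as observed in the proof of Proposition \ref{pma.simp}) and sits between $R_f$ and $\Frac(R_f) \iso \Frac(\fwa)$ by Proposition \ref{prop.diff}(1), so $\cnt(\mR) = \kk$. Write $z \in \cnt(\mP)$ as $z = \sum_{i=0}^N u^i z_i$ with $z_i \in \mR$. Applying the commutation rule $ur = \sigma_2(r)u$, the identity $zu = uz$ forces $\sigma_2(z_i) = z_i$ for each $i$, while $zr = rz$ for all $r \in \mR$, after matching the coefficient of $u^i$ on either side, yields $z_i r = \sigma_2^{-i}(r)\, z_i$. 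For $i = 0$ this forces $z_0 \in \cnt(\mR) = \kk$; for $i \neq 0$ with $z_i \neq 0$, passing to $\Frac(\mR)$ makes $z_i$ a unit and the equation says $z_i$ implements $\sigma_2^{-i}$ as an inner automorphism of $\Frac(\mR)$. Moreover, because $\mR$ is simple, the two-sided ideal $\mR z_i \mR$ equals $\mR$, and a brief manipulation using $rz_i = z_i\sigma_2^i(r)$ produces both a left and a right inverse for $z_i$ inside $\mR$, showing $\sigma_2^{-i}$ is actually inner in $\mR$ itself.

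The units of $\mR$ being scalars times monomials $p_1^{s_1} \cdots p_k^{s_k}$, the analysis from the proof of Proposition \ref{pma.simp} shows that $\sigma_2^{-i}$ is inner in $\mR$ if and only if $-ig$ lies in the $\ZZ$-span of $f_1,\ldots,f_k$. This proves (1): if $g$ is not a local reduction of $f$, then no such $i \neq 0$ exists, forcing $z = z_0 \in \kk$. For (2), write $g = -\frac{1}{n}\sum m_j f_j$ with $\gcd(m_1,\ldots,m_k,n) = 1$. The $\kk$-linear independence of the $f_j$'s (visible from partial fractions of $f$, even in the repeated root case) together with an elementary gcd computation confines the admissible $i$'s to the subgroup $n\ZZ$; for $i = nj$ the implementer is, up to a scalar in $\cnt(\mR) = \kk$, the unit $\eta^j$ with $\eta = p_1^{m_1}\cdots p_k^{m_k}$. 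Setting $v := \eta u^n$, the identity $\sigma_2(\eta) = \eta$ makes $u$ and $\eta$ commute, so $v^j = \eta^j u^{nj}$ and a direct calculation verifies $v$ is central in $\mP$. Consequently every central $z$ has the shape $\sum_j \mu_j v^j$, giving $\cnt(\mP) = \kk[v]$.

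Finally, (3) will follow from the inclusion $\cnt(P) \subseteq \cnt(\mP)$: when $g$ is not a local reduction, (1) gives it immediately, and when $g$ is a local reduction one must show $\kk[v] \cap P = \kk$. I expect the main obstacle to be this last step---identifying precisely when the monomial $\eta$, and hence each power $v^j$ for $j>0$, fails to lie in $P$---together with the careful bookkeeping in the implementer argument and the gcd analysis isolating $n\ZZ$. Once these are in hand, each of (1)--(3) falls out of the skew polynomial expansion of an arbitrary central element.
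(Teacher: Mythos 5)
Your argument for parts (1) and (2) is correct but takes a genuinely different route from the paper. The paper expands a central element $\omega$ first in powers of $a$ (killing the $a$-degree via $[\omega,c]$) and then in powers of $u$ with coefficients in $\kk[c]\mC\inv$, determining those coefficients by solving the differential equation $f\,\partial_c\omega_i+ig\omega_i=0$ by separation of variables. You instead expand in powers of $u$ over the simple ring $\mR$ and reduce everything to the inner-automorphism analysis of Proposition \ref{pma.simp}; the step where simplicity and $rz_i=z_i\sigma_2^i(r)$ force each nonzero $z_i$ to be a unit of $\mR$ is exactly the right way to pass from ``inner in $\Frac(\mR)$'' to ``inner in $\mR$,'' and your gcd argument pinning the admissible indices to $n\ZZ$ supplies a normalization ($n$ minimal, equivalently $\gcd(n,m_1,\dots,m_k)=1$) that the paper needs but leaves implicit. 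Two further points in your favor: your $v=p_1^{m_1}\cdots p_k^{m_k}u^n$ is the element that is actually central --- the exponents in the printed statement carry the wrong sign, as one checks against the Remark $v=c\inv u$ for $f=cg$ (where $m_1=-n$), and the paper's ``putting these two equations together'' step in fact makes the two conjugation effects add rather than cancel. Also, item (1) as printed contradicts item (2); what you prove, namely that $\cnt(\mP)\neq\kk$ forces $g$ to \emph{be} a local reduction, is the intended statement.

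The genuine gap is in part (3), and you have correctly located it: one must show $\kk[v]\cap P=\kk$ when $g$ is a local reduction, and this is not a formality --- it is false in general. Comparing coefficients of $u^{nj}$ in the PBW basis of $P$, one has $v^j\in P$ for some $j>0$ exactly when every $m_l\geq 0$, and this case does occur: for $g=0$ (all $m_l=0$, $v=u$) the element $u$ is already central in $P(f,0)$, and for $f=c^2$, $g=-c$ (so $m_1=n$) the element $cu$ is central in $P(c^2,-c)$. So (3) requires the additional hypothesis that some $m_l<0$; this holds in the paper's case of interest $f=cg$, where $m_1=-n<0$, so $v=c\inv u\notin P$ and $\kk[v]\cap P=\kk$ follows at once. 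The paper's own proof merely asserts that (3) follows from (1) and (2) and never addresses this point, so you will not find the missing step there; to finish, add the observation about the signs of the $m_l$ and either impose that hypothesis or record the exceptional cases explicitly.
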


\begin{proof}
We only need to prove (2) since (1) follows from
Proposition \ref{pma.simp} and 
(3) follows from (1) and (2).

A direct calculation shows $ap_{i}=p_{i}\left( a+f_{i}\right) $ for all $i\leq k$. Moreover,
\[ a\left( p_1^{-m_1}\cdots p_k^{-m_k}\right) 
= \left( p_1^{-m_1}\cdots p_k^{-m_k}\right)  \left(a+\sum_{i=1}^{k}-m_{i}f_{i}\right)\]
and 
\[ au^n = u^n \left( a+ng\right) = u^n\left(a+\sum_{i=1}^{k}-m_{i}f_i\right).\]
Putting these two equations together gives 
\[a\left( p_1^{-m_1}\cdots p_k^{-m_k}\right) u^n = \left( p_1^{-m_1}\cdots p_k^{-m_k}\right) u^{n}a,\]
hence $\left( p_1^{-m_1}\cdots p_k^{-m_k}\right) u^n$ is central. 
This proves $v$ is central. 
To finish the proof of (2), we must show any central element of 
$\mP$ is a linear combination of powers of $v$.

Given $\omega \in \cnt(\mP)$ we may write 
$\omega = \sum r_{i}a^{i}$ with $r_{i}\in \kk[c,u]\mC\inv$. 
Since $\omega$ is central, then
\[ 0=[\omega ,c]=\sum r_{i}[a^{i},c]=\sum r_{i}\left( (a+g)^{i}-a^{i})\right).\]
Hence, the $a$-degree of $\omega$ is zero. 
We assume $\omega \notin \kk$ and write 
\[ \omega =\sum_{i=0}^{m}\omega_{i}u^{i}. \]
with $\omega_0,\hdots,\omega_m\in \kk[c]\mC\inv$
such that $\omega_m \neq 0$, $m\geq 1$. 
Since $a\omega =\omega a$, then $D\omega=0$.
Thus,
\begin{align*}
0  
&= D\omega 
 = \sum_{i=0}^m \left( \left( D\omega_i\right) u^i + \omega_i\left(Du^i\right) \right)
 = \sum_{i=0}^m \left( f\left( \partial_{c}\omega_i \right) u^i + ig\omega_i u^i\right)
 = \sum_{i=0}^m \left( f\left( \partial_{c}\omega_i \right) +ig\omega_i \right) u^i.
\end{align*}
According to the above calculation, 
$f\left( \partial_{c}\omega_{i}\right)+ig\omega_{i}=0$ for $i=0,\hdots,m$. 
We may use separation of variables to solve this differential equation. 
This gives $\omega_{i} = \lambda_{i}\left( p_1^{-m_1}\cdots p_k^{-m_k}\right)^{i/n}$ for some $\lambda_i\in \kk $. 
If $\lambda_i\neq 0$, then the power $i/n$ must be a nonnegative integer since $\omega_i\in \kk[c]\mC\inv$. 
This finishes the proof of (2) since 
\[ \omega = \sum_{i=0}^{m}\lambda_{i}\left( p_1^{-m_1}\cdots p_k^{-m_k}\right)^{i/n}u^{i} \
= \sum_{i=0}^{m}(-1)^{i/n}\lambda_{i}v^{i/n}\in \kk[v].\]
\end{proof}

\begin{rmk}
In the special case $f=cg$, we have $v=c^{-1}u$.
\end{rmk}

Fix $f \in \kk[c]$.
If $f \notin \kk$, then up to isomorphism of $P(f,g)$, 
we may choose $f$ such that $f(0)=0$.
\emph{For the remainder of this section, 
we set $g=c\inv f$ and let $P=P(cg,g)$ when
$f \notin \kk$.}
This hypothesis implies that $g$ is a local reduction of $f$.
Hence, $\cnt(\mP)=\kk[c\inv u]$. Set $v=c\inv u$.
If $f \in \kk$, we set $f=1$ and $g=0$.

\begin{prop}
\label{pma.primes2}
If $f \notin \kk$, then 
the height one primes of $P$ are of the form
$(c-\alpha)$ and $(u-\beta c)$ for $\alpha, \beta \in \kk$ with
$f(\alpha)=0$ and $\beta$ arbitrary.
Moreover, $P/(c-\alpha)P \iso \kk[a,u]$ and $P/(u-\beta c)P \iso R_f$.
\end{prop}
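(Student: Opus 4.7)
The strategy is to verify directly that the listed ideals are height one primes with the claimed quotient structure, then establish that they are the only height one primes by localizing at $\mC$ and exploiting the fact that Proposition \ref{prop.cntP}'s center computation forces $\sigma_2$ to be inner on $\mR$.

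For the first half, I would confirm $(c-\alpha)$ is prime of height one with quotient $\kk[a,u]$ by invoking Proposition \ref{pma.props} together with the change of variables $u = d - a$. For $(u - \beta c)$, one checks normality via $c(u - \beta c) = (u - \beta c)c$ together with
\[
a(u - \beta c) = (ua + gu) - \beta(ca + cg) = (u - \beta c)(a + g),
\]
where I use $f = cg$ and the fact that $u$ commutes with $g \in \kk[c]$. In the quotient $P/(u - \beta c)P$, eliminating $d = a + \beta c$ leaves exactly the defining relation $ac = ca + f$ of $R_f$, so the quotient is the noetherian domain $R_f$; hence $(u - \beta c)$ is prime, and normality in the noetherian domain $P$ bounds its height by one via the noncommutative principal ideal theorem.

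For the exhaustion step, let $\mathfrak{p}$ be a height one prime of $P$. If $\mathfrak{p}$ meets the Ore set $\mC$ generated by the irreducible factors of $f$, then it contains some $c - \alpha$ with $f(\alpha) = 0$, forcing $\mathfrak{p} = (c - \alpha)$. Otherwise $\mathfrak{p}$ survives in $\mP = P\mC\inv$. The key identification here is that $cac\inv = a - c\inv f = a - g = \sigma_2(a)$, so $\sigma_2(r) = crc\inv$ is inner on $\mR = R_f\mC\inv$; setting $v = c\inv u$ then gives $vr = c\inv \sigma_2(r) u = r c\inv u = rv$, and since $u = cv$ we identify $\mP \iso \mR[v]$ with $v$ a central indeterminate. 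Proposition \ref{pma.simp} says $\mR$ is simple, and combining Proposition \ref{prop.cntP}(2) with $\cnt(\mR[v]) = \cnt(\mR)[v]$ gives $\cnt(\mR) = \kk$. A standard minimal-degree argument, using simplicity of $\mR$ to normalize the leading coefficient of an ideal element to $1$ and then commuting the resulting monic polynomial with elements of $\mR$ to push all coefficients into $\cnt(\mR) = \kk$, shows that every nonzero two-sided ideal of $\mR[v]$ is generated by a polynomial in $\kk[v]$. Over algebraically closed $\kk$, the height one primes of $\mR[v]$ are therefore exactly $(v - \beta)\mR[v]$, and contracting yields $u - \beta c = c(v - \beta) \in \mathfrak{p}$, so $\mathfrak{p} = (u - \beta c)P$ by comparing heights.

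The main obstacle will be the classification of two-sided ideals of $\mR[v]$, which requires both the simplicity of $\mR$ (to normalize leading coefficients via a combination of left and right multipliers) and the identification $\cnt(\mR) = \kk$ (to land coefficients in the ground field); both ingredients are available but must be marshalled carefully. A secondary concern is ensuring the contraction of $(v - \beta)$ from $\mP$ back to $P$ produces exactly $(u - \beta c) P$ rather than a larger ideal, which reduces to the height-one primality established in the first half.
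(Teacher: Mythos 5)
Your proposal is correct and follows essentially the same route as the paper: split off the height one primes meeting the Ore set $\mC$ as the ideals $(c-\alpha)$, then use the simplicity of $\mR$ and the centrality of $v=c\inv u$ to identify the remaining primes of $\mP$ with the primes of $\kk[v]$ and contract $(v-\beta)$ to $(u-\beta c)$. The only difference is that where the paper simply cites \cite[Corollary 2.3]{LM} for the correspondence between the primes of $\mP$ and those of $\cnt(\mP)=\kk[v]$, you prove the needed special case directly via the identification $\mP\iso\mR[v]$ and a minimal-degree argument (note the simplicity of $\mR$ comes from Proposition \ref{prop.diff} rather than Proposition \ref{pma.simp}), and you also verify explicitly the normality of $u-\beta c$ and the two quotient isomorphisms, which the paper leaves to the surrounding discussion.
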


\begin{proof}
Since $\mR$ is simple, then by \cite[Corollary 2.3]{LM}
the prime ideals of $P$ are in 1-1 correspondence with $\cnt(\mP)=\kk[v]$.
Hence, the remaining prime ideals are generated in $\mP$ by $v-\beta$ and
$(v-\beta) \cap P = (u-\beta c)$.
\end{proof}

There is another way to view the above proposition.
Note that $P[g\inv]$ is isomorphic to the universal enveloping algebra 
of the $3$-dimensional Lie algebra $L$
on generators $x,y,z$ subject to the relations $[x,y]=y$, $[x,z]=z$, and $[y,z]=0$.
Hence, the prime ideals of $P$ disjoint from irreducible factors of $g$ are in $1-1$
correspondence with the prime ideals of $U(L)$.

\begin{prop}
\label{prop.pmauto}
Suppose $f \notin \kk$.
For any $\alpha,\lambda,\mu \in \kk^\times$, $\eta \in \kk$, and $h \in \kk[u,c]$
there is an automorphism $\pi$ of $P$ such that $\pi(g)=\lambda g$ determined by the assignments
\begin{align*}
	a \mapsto \lambda a + h, \;\;
	c \mapsto \ep c, \;\; \text{and }
	u \mapsto \mu u + \eta c.
\end{align*}
Moreover, any automorphism of $P$ has the above form.
\end{prop}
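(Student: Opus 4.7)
The plan has two parts: first verifying that the listed assignments define an automorphism, then classifying all automorphisms. For the forward direction, I will extend the given formulas to the generators of Presentation 2 of Remark \ref{Presenting P}, and check that each of the three defining relations $[a,c]=cg$, $[a,u]=gu$, and $[c,u]=0$ is preserved. Because $h\in\kk[u,c]$ commutes with $c$ and $u$, it drops out of the first two commutators, and the constraint $\pi(g)=\lambda g$, equivalent to the polynomial identity $g(\alpha c)=\lambda g(c)$, supplies exactly the scaling needed on the right. Bijectivity is immediate by writing down an inverse of the same form.

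For the converse, let $\pi\in\Aut(P)$. I will first identify $\pi(c)$. By Proposition \ref{pma.primes2} the height-one primes of $P$ split into those of the form $(c-\alpha')$ with $f(\alpha')=0$ and commutative quotient $\kk[a,u]$, and those of the form $(u-\beta c)$ with noncommutative quotient $R_f$. Since $f\notin\kk$ makes $R_f$ noncommutative, $\pi$ must preserve each family separately, so $\pi((c))=(c-\alpha_0)$ for some root $\alpha_0$ of $f$. Because $c$ and $c-\alpha_0$ are normal and $P$ is a noetherian domain whose units are the scalars $\kk^\times$, the standard rigidity argument for principal primes generated by normal elements yields $\pi(c)=\lambda_0(c-\alpha_0)$ with $\lambda_0\in\kk^\times$.

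Applying this reasoning to every linear factor of $f$ shows $\pi$ preserves the Ore set $\mC$ up to scalars, and hence extends to an automorphism of the localization $\mP$. By Proposition \ref{prop.cntP}, $\cnt(\mP)=\kk[v]$ with $v=c\inv u$, so $\pi$ restricts to an automorphism of $\kk[v]$, giving $\pi(v)=\alpha'' v+\beta''$ for some $\alpha''\in\kk^\times$ and $\beta''\in\kk$. Expanding $\pi(u)=\pi(c)\pi(v)$ and using the identity $(c-\alpha_0)c\inv u=u-\alpha_0 v$ produces
\[
\pi(u)=\lambda_0\alpha'' u+\lambda_0\beta'' c-\lambda_0\beta''\alpha_0-\lambda_0\alpha''\alpha_0 v.
\]
Because $\pi(u)\in P$ while $v\notin P$, the coefficient of $v$ must vanish, forcing $\alpha_0=0$. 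Relabelling yields $\pi(c)=\alpha c$ and $\pi(u)=\mu u+\eta c$ for suitable $\alpha,\mu\in\kk^\times$ and $\eta\in\kk$.

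Finally, write $\pi(a)=\sum_{i=0}^n a^i q_i$ with $q_i\in\kk[c,u]$. The commutator $[\pi(a),c]=\sum_{i=1}^n [a^i,c]q_i$ has top term of $a$-degree $n-1$, so matching it against $[\pi(a),\pi(c)]=\pi(cg)=\alpha c\,g(\alpha c)$, which has $a$-degree zero, forces $n\le 1$; comparing the remaining coefficients shows $q_1$ is a scalar $\lambda\in\kk^\times$ satisfying $g(\alpha c)=\lambda g(c)$, which is precisely the constraint $\pi(g)=\lambda g$, while $h:=q_0\in\kk[c,u]$ is unconstrained. The main obstacle is the central-element argument in the third paragraph: having shown $\pi(c)=\lambda_0(c-\alpha_0)$, one must extend $\pi$ to $\mP$ and use the center $\kk[v]$ to force the shift $\alpha_0$ to vanish; once that is done, the forms of $\pi(u)$ and $\pi(a)$ follow fairly directly.
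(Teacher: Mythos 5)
Your proposal is correct, and its overall skeleton matches the paper's: both use Proposition \ref{pma.primes2} to see that $\pi$ permutes each family of height-one primes separately (distinguishing them by commutativity of the quotients), pin down $\pi(c)$ and $\pi(u)$ from normality and the triviality of the unit group, and then degree-count in Presentation 2 to get $\pi(a)=\lambda a+h$. Where you genuinely diverge is in killing the constant term of $\pi(c)$ and in producing the form of $\pi(u)$. The paper stays entirely inside $P$: it reads $\pi(u)=\mu u+\eta c$ directly off the primes $(u-\beta c)$, then applies $\pi$ to the relations $au=ua+ug$ and $ac=ca+f$ to get $\pi(g)=\lambda g$ and $\pi(f)=\ep\lambda f$, and finally uses $f=cg$ to force $\kappa=0$. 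You instead extend $\pi$ to the localization $\mP$, invoke $\cnt(\mP)=\kk[v]$ from Proposition \ref{prop.cntP}, and observe that $\pi(u)=\pi(c)\pi(v)$ can only land back in $P$ if the shift $\alpha_0$ vanishes; this handles the vanishing of the constant term and the shape of $\pi(u)$ in one stroke, at the cost of importing the localization and center machinery. You also derive $\pi(g)=\lambda g$ from the $[a,c]$ relation rather than the $[a,u]$ relation, and you explicitly verify the forward direction (that the assignments define an automorphism), which the paper leaves implicit; both versions of each step are sound.
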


\begin{proof}
Let $\pi \in \Aut(P)$.
By Proposition \ref{pma.primes2}, $\pi$ fixes the ideals $(c-\alpha)$ and $(u-\beta c)$.
Hence $\pi(c)=\ep c + \kappa$ for some $\ep \in \kk^\times$ and $\kappa \in \kk$ with $f(-\ep\inv\kappa)=0$.
Similarly, $\pi(u)=\mu u+\eta c$ for some $\mu \in \kk^\times$ and $\eta \in \kk$.
A degree counting argument now shows that $\pi(a)=\lambda a + h$ for some $\lambda \in \kk^\times$ and $h \in \kk[c,u]$.
To complete the proof we must show $\kappa = 0$, $\pi(g)=\lambda g$, and $\pi(f)=\ep\lambda f$.

Applying $\pi$ to $au=ua+ug$ gives $\lambda\mu gu + \lambda\eta f = \mu\pi(g)u+\eta\pi(g)c$.
Equating coefficients of $u$ gives $\lambda \mu gu=\mu \pi(g)u$ and $\lambda \eta f=\eta \pi(g)c$. 
Thus $\pi(g)=\lambda g$. 
Applying $\pi$ to $ac=ca+f$ and reducing gives $\pi(f)=\ep \lambda f$. 
Combining this with $f=cg$ and $\pi(g)=\lambda g$ gives $\kappa =0$.
\end{proof}

The following proposition is now a consequence of Proposition \ref{ispe.birat}.	
	
\begin{prop}
\label{prop.pma-birat}
The ISPE $P$ is birationally equivalent to $A_1(\kk[t])$.
\end{prop}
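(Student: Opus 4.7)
The plan is to invoke Proposition \ref{ispe.birat} applied to $P$, viewed as an ISPE over $R = \kk[c]$ with the single generator $r_1 = c$. Three of the four hypotheses there are nearly automatic: $R$ is affine and commutative; because $R_f = \kk[c][a;\delta_1]$ is a differential operator ring, $\sigma_1 = \id_R$; the element $u = d-a$ is normal in $P$ by Remark \ref{Presenting P}; and the compatibility condition on the quantities $r_i\inv \delta_1(r_i)$ is vacuous when $R$ has only one generator.

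The nontrivial step will be to verify that $a$ and $c\inv u$ commute in $\Frac(P)$. In the case $f \notin \kk$, where $P = P(cg,g)$ and $ac = ca + cg$, I would first extract the localized identity $ac\inv = c\inv a - gc\inv$ by combining the defining relation with $cc\inv = 1$. I would also use $ua = (a-g)u$, which follows from $au = ua + gu$ together with the fact that $u$ commutes with $\kk[c]$ and hence with $g$. A short calculation then gives $ac\inv u = c\inv a u - gc\inv u$ and $c\inv u a = c\inv(a-g)u = c\inv a u - c\inv g u$, so the two sides agree precisely because $gc\inv = c\inv g$. The essential algebraic content is the identity $f = cg$, which forces the contribution $[a, c\inv]u = -c\inv g u$ to cancel against $c\inv[a,u] = c\inv g u$. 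This is the step I expect to need the most care, though it amounts to routine bookkeeping rather than a conceptual obstacle.

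The degenerate case $f \in \kk$ (with $f = 1$ and $g = 0$) does not satisfy the hypotheses of Proposition \ref{ispe.birat} and must be handled directly. Here the defining relations collapse to $ac = ca + 1$, $au = ua$, and $uc = cu$, so $u$ is central in $P$ and the subalgebra generated by $a$ and $c$ is $\fwa$. Therefore $P \iso \fwa \tensor \kk[u] \iso A_1(\kk[u])$, which is isomorphic, and hence birationally equivalent, to $A_1(\kk[t])$.
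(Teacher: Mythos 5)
Your proof is correct and follows the same route as the paper, whose entire argument is the one-line remark that the proposition is ``a consequence of Proposition \ref{ispe.birat}.'' You supply the details the paper omits --- in particular the verification that $a$ and $c\inv u$ commute, which hinges exactly on $f=cg$, and the observation that the degenerate case $f\in\kk$ (where that commutation fails, so Proposition \ref{ispe.birat} does not apply) must be, and easily is, handled separately via $P(1,0)\iso \fwa\tensor\kk[u]\iso A_1(\kk[t])$.
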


\section{Generalized Jordanian Matrix Algebras}
\label{sec.gjma}

\begin{defn}
\label{defn.matalg}
We say $M=P(f,g)[b;\sigma_3,\delta_3]$ is a \textit{Generalized Jordanian Matrix Algebra} (GJMA) 
if it is birationally equivalent to $\mj$ and the involution $\tau$ extends to $M$ with $\tau(b)=b$.
\end{defn}

\begin{rmk}
By \cite[Proposition 1.8]{DumasRigal}, $\mj$ is birationally equivalent to $A_1(\kk[s,t])$.
Thus, $P(f,g)[b;\sigma_3,\delta_3]$ is a GJMA if and only if it
is birationally equivalent to $A_1(\kk[s,t])$ and $\tau$ extends appropriately.
\end{rmk}

\begin{ex}
Since the ISPE $P(1,0) \iso A_1(\kk[t])$, 
then it follows that $A_1(\kk[s,t]) \iso P(1,0)[b]$ is itself a GJMA.
\end{ex}

In this section, we construct a family of GJMAs and study their properties.
We fix some notation throughout this section.
Let $f, g \in \kk[c]$, $f \notin \kk$, 
such that $g$ is a local reduction of $f$ and set $P=P(f,g)$.
Let $v$ be the central element defined in Proposition \ref{prop.cntP}.
Let $\mC$ be the Ore set generated by powers of irreducible factors of $f$
and $\mU$ the Ore set generated by $\mC$ along with powers of $u-\beta c$ for $\beta \in \kk$.
Let $\mP=P\mC\inv$ and $\sP=P\mU\inv$.
To simplify notation, we write $\sigma=\sigma_3$ and $\delta=\delta_3$.
In order to construct GJMAs $M=P[b;\sigma,\delta]$ we consider restrictions on $\sigma$ and $\delta$.
Let $\mM=M\mC\inv$ and $\sM=M\mU\inv$.
We rely on comparisons with $\mj$ and key characteristics of $P$ outlined in Section \ref{sec.pmadiff}
to identify additional restrictions. 

\begin{prop}
\label{prop.gjmain}
If $\sigma$ is an inner automorphism of $\sP$ and
$\delta$ is an inner $\sigma$-derivation, 
then $M= P[b;\sigma,\delta]$ is birationally equivalent to $\mj$.
\end{prop}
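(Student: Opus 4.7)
The plan is to show $\Frac(M) \iso \Frac(A_1(\kk[s,t]))$, which by the remark following Definition \ref{defn.matalg} is equivalent to the desired birational equivalence with $\mj$. The strategy is to pass to the localization at $\mU$, where by hypothesis the skew polynomial extension becomes an ordinary commutative polynomial extension, and then combine with the birational equivalence of $P$ with $A_1(\kk[t])$ supplied by Proposition \ref{prop.pma-birat}.

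First I would verify that $\mU$ is an Ore set in all of $M$ and that $\sM := M\mU\inv \iso \sP[b;\sigma,\delta]$, with $\sigma$ and $\delta$ extended to $\sP$. The generators of $\mU$ (the irreducible factors of $f$ together with the elements $u - \beta c$) are normal in $P$ by Proposition \ref{pma.props} and the discussion preceding Proposition \ref{pma.primes2}, hence they form an Ore set in $P$. Since $\sigma$ is inner on $\sP$ it preserves $\mU$ up to units, which is the compatibility needed to make $\mU$ Ore in $M$; in particular $\Frac(M) = \Frac(\sM)$.

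Next I would invoke the two reductions reviewed in Section \ref{sec.skewpolys} in sequence. Writing $\delta(r) = \theta r - \sigma(r)\theta$ for some $\theta \in \sP$, the substitution $b' = b - \theta$ yields $\sM \iso \sP[b';\sigma]$. Writing $\sigma(r) = \varphi\inv r \varphi$ for some unit $\varphi \in \sP$, the substitution $b'' = \varphi b'$ then yields $\sM \iso \sP[b'']$, a commutative polynomial extension of $\sP$. Chaining birational equivalences gives
\[ \Frac(M) = \Frac(\sM) \iso \Frac(\sP)(b'') \iso \Frac(P)(b'') \iso \Frac(A_1(\kk[t]))(b'') \iso \Frac(A_1(\kk[s,t])), \]
where the penultimate step applies Proposition \ref{prop.pma-birat} and the last step uses that $b''$ is a central indeterminate, so $A_1(\kk[t])[b''] \iso A_1(\kk[s,t])$.

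The main obstacle I expect is the verification in the first step: carefully confirming that $\mU$ is an Ore set in $M$ and that the localized Ore extension equals the Ore extension over the localized base. The hypothesis that $\sigma$ is inner on $\sP$ supplies the needed compatibility of $\sigma$ with $\mU$, while $\delta$ being a $\sigma$-inner derivation on $\sP$ ensures that $\delta$ evaluated on any element of $\mU$ lands in $\sP$, so no further denominators are introduced. Once this bookkeeping is in place, the chain of isomorphisms above is essentially automatic.
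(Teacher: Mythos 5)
Your proof is correct and follows essentially the same route as the paper's: the central element $b''=\varphi(b-\theta)$ you produce is exactly the paper's $z$, and the paper likewise reduces to the birational equivalence of $P$ with $A_1(\kk[t])$ (there realized by writing explicit inverse homomorphisms on the generating set $\{a,c,v,z\}$ of $\sM$ rather than by citing Proposition \ref{prop.pma-birat}). Your added care with the Ore localization at $\mU$ addresses a point the paper passes over silently.
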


\begin{proof}
By the hypothesis, there exist $\varphi \in \mC$ and $\theta \in \sP$ such that
$\sigma(r) = \varphi\inv r \varphi$ and $\delta(r) = \theta r-\sigma(r)\theta$ for all $r \in \sP$.
In particular, the element $z=\varphi(b-\theta)$ is central in $\sM$.
Moreover, $v=c\inv u$ is central in $\mP$ by Proposition \ref{prop.cntP}.
Thus, it is clear $\{a,c,v,z\}$ is a $\kk$-algebra generating set of $\sM$.
Moreover, there are inverse homomorphisms $\Phi:\sM \rightarrow \Frac(A_1(\kk[s,t]))$ and 
$\Psi:\Frac(A_1(\kk[s,t])) \rightarrow \sM$ given by
\begin{align*}
	&\Phi(a) = xf(y),  		\Phi(c)=y, 		\Phi(v)=yt,	\Phi(z)=s, \\
	&\Psi(x) = af\inv(c), 	\Psi(y) = c,	\Phi(t)=v,	\Phi(s)=z.
\end{align*}
Thus, $M$ is birationally equivalent to $\mj$.
\end{proof}

\begin{ex}
Let $P=P(c,1)$ and consider the algebra $G=P[b;\sigma,\delta]$ given by 
\begin{align*}
\sigma(a) &= c\inv ac = a+1, \;\; 
\sigma(u) = u, \;\; 
\sigma(c)=c, \\
\delta(a) 
	&= -(u/2+a)a-(a+1)(-(u/2+a)) 
	= (-ua/2 - a^2) + (au/2 + a^2 + u/2 + a) = u+a, \\
\delta(u) 
	&= \tau \left( \delta(a) \right) - \delta(a) 
	= -u+u+a-(u+a) =-u, \\
\delta(c) 
	&= \theta c-\sigma(c)\theta 
	= -(u/2+a)c-c(-(u/2+a)) = -ac+ca=-c.
\end{align*}
Thus, the relations for $G$ may be given as those for $P(c,1)$ along with
\begin{align*}
	bc=cb-c, \;\;
	bu=ub-u, \;\;
	ba=(a+1)b + (u+a).
\end{align*}
The element $z=2(cb+ca)+uc$ is central in $G$.
It is not difficult to show that $G$ satisfies 
Proposition \ref{prop.gjmain} and that 
$\tau(b)=b$ extends to an involution of $G$.
Hence, $G$ is a GJMA.
\end{ex}

We now restrict to the case $f=cg$ so that $v=c\inv u$.

\begin{prop}
\label{gjma.constr}
Choose $f \in \kk[c]$, $f=cg$, and let $P=P(f,g)$.
Choose $p$ an irreducible factor of $f$ and $\theta \in P$.
Define $G(f,p,\theta):=P[b;\sigma,\delta]$ by
$\sigma(x) = p\inv x p$ and
$\delta(x) = \theta x - \sigma(x) \theta$ for $x=a,u,c$.
Then $G(f,p,\theta)$ is a GJMA if and only if
$\delta(u) = \tau \left( \delta(a) \right) - \delta(a)$ and
$\delta(c) = \tau(\delta(c))$.
\end{prop}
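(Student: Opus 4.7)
The plan is to notice that birational equivalence to $\mj$ is automatic from the construction, so the GJMA condition reduces entirely to whether the involution $\tau$ extends from $P$ to $G(f,p,\theta)$ with $\tau(b)=b$. First I would observe that since $p$ is an irreducible factor of $f$ it lies in $\mC \subseteq \mU$ and is therefore a unit in $\sP$; hence $\sigma(x) = p\inv x p$ is an inner automorphism of $\sP$, and $\delta$ is an inner $\sigma$-derivation with element $\theta \in P \subseteq \sP$. Proposition \ref{prop.gjmain} then yields birational equivalence of $G(f,p,\theta)$ with $\mj$ for every admissible $p$ and $\theta$, so $G(f,p,\theta)$ is a GJMA if and only if $\tau$ extends to an involution of $G$ fixing $b$.

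To extract the extension condition, I would use that $G$ is determined by $P$ together with the Ore relations $bx = \sigma(x)b + \delta(x)$ for $x$ in the generating set $\{a,c,u\}$ from Remark \ref{Presenting P}. Setting $\tau(b) = b$ and applying $\tau$ to such a relation yields $b\tau(x) = \tau\sigma(x)\,b + \tau\delta(x)$, and this must agree with the Ore relation $b\tau(x) = \sigma\tau(x)\,b + \delta\tau(x)$ valid because $\tau(x) \in P$. Comparing $b$-coefficients (using the uniqueness of normal forms in the Ore extension) shows that $\tau$ extends to a ring endomorphism of $G$ (automatically an involution, since $\tau^2 = \id$ on $P$ and on $b$) if and only if $\sigma\tau = \tau\sigma$ and $\delta\tau = \tau\delta$ on each of $a$, $c$, $u$.

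Finally I would verify which of these identities are automatic and which impose genuine conditions. The identity $\sigma\tau = \tau\sigma$ holds automatically on the three generators: $\sigma$ fixes $c$ and $u$ (the latter since $u$ commutes with $\kk[c]$, in particular with $p$), while $\sigma(a) = a + p\inv f p' \in \kk[c][a]$ has $\tau$-fixed coefficient $p\inv fp'$, so each generator is a direct check. For $\delta\tau = \tau\delta$, the case $x = c$ reads exactly $\delta(c) = \tau(\delta(c))$, and the case $x = a$, after substituting $\tau(a) = a + u$ and $\delta(\tau(a)) = \delta(a) + \delta(u)$, reads $\delta(u) = \tau(\delta(a)) - \delta(a)$, giving the two stated conditions. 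The remaining case $x = u$ reads $\tau(\delta(u)) = -\delta(u)$; applying $\tau$ to $\delta(u) = \tau(\delta(a)) - \delta(a)$ and using $\tau^2 = \id$ shows this is automatic, so no additional condition arises. The only place one might expect an obstruction is precisely this potential third constraint, but the involutivity of $\tau$ absorbs it, so the two conditions in the statement are both necessary and sufficient.
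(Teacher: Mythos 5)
Your proposal is correct and follows essentially the same route as the paper: both reduce the GJMA condition to extending $\tau$ via Proposition \ref{prop.gjmain} (birational equivalence being automatic since $p$ is invertible in the localization), and both extract the two stated conditions by applying $\tau$ to the Ore relations for $a$ and $c$. Your explicit check that the $u$-relation imposes no further constraint is the same observation the paper makes when it notes the derived identity is fixed by $\tau$, just stated more systematically.
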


\begin{proof}
By construction, $\sigma$ is an inner automorphism of $P$.
Note that $p\inv c p = c$, $p\inv u p = u$, and
$p\inv a p = p\inv p'f\in P$ since $p$ is a factor of $f$.
Moreover, $\delta$ is the inner $\sigma$-derivation of $P$ 
determined by $\theta$.

By Proposition \ref{prop.gjmain}, $G(f,p,\theta)$
is a GJMA if $\tau$ extends to an involution with $\tau(b)=b$.
We must show $\tau$ is a homomorphism of $G$.
It will then follow that $\tau$ is an involution of $G$.

Applying $\tau$ to the identity 
$0=\sigma(a)b-ba+\delta(a)$ gives
\begin{align*}
0 	&= \left( p\inv (u+a) p \right) b - b(u+a) + \tau(\delta(a)) \\
	&= \left( \sigma(a)b - ba \right) + \left( \sigma(u)b-bu \right) + \tau(\delta(a)) \\
	&= \sigma(u)b - bu + \left( \tau(\delta(a)) - \delta(a) \right).
\end{align*}
This identity is fixed by $\tau$ and hence 
holds if and only if $\delta(u)$ satisfies the hypothesis.
Repeating with $bc=cb+\delta(c)$ gives 
$\tau(\delta(c)) = \delta(c)$.
\end{proof}

\begin{prop}
Suppose $G=G(f,p,\theta)$ is a GJMA.
We localize $G$ by localizing the base ring $P$
and extending $\sigma$ and $\delta$ appropriately.
We write $\mG = \mP[b;\sigma,\delta]$.
Then $v=c\inv u$ is central in $\mG$ if
and only if $\delta(c) = cu\inv\delta(u)$.
\end{prop}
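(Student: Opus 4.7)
The plan is to reduce the statement to a single computation of $\delta(v)$ in $\mP$.

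First, I would invoke Proposition \ref{prop.cntP} to conclude $v = c\inv u \in \cnt(\mP)$, so $v$ already commutes with every element of $\mP$. Since $\mG = \mP[b;\sigma,\delta]$ is generated as an algebra by $\mP$ together with $b$, the centrality of $v$ in $\mG$ reduces to the single equation $bv = vb$. Applying the Ore commutation rule $bv = \sigma(v)b + \delta(v)$ and recalling that $\sigma$ is the inner automorphism of $\mP$ given by conjugation by $p$, I obtain $\sigma(v) = p\inv v p = v$ (again because $v$ is central in $\mP$). Hence $bv = vb + \delta(v)$, so $v$ is central in $\mG$ if and only if $\delta(v) = 0$.

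Second, I would compute $\delta(v)$ explicitly. The map $\delta$ extends from $P$ to $\mP$ as a $\sigma$-derivation, and applying it to the identity $1 = cc\inv$ yields
\[
\delta(c\inv) = -\sigma(c)\inv \delta(c) c\inv.
\]
Expanding $\delta(v) = \delta(c\inv u)$ via the $\sigma$-Leibniz rule gives
\[
\delta(v) = \sigma(c)\inv \delta(u) + \delta(c\inv) u
= \sigma(c)\inv\bigl(\delta(u) - \delta(c) c\inv u\bigr)
= \sigma(c)\inv\bigl(\delta(u) - \delta(c) v\bigr).
\]
Since $\sigma(c)\inv$ is a unit in $\mP$, the vanishing $\delta(v) = 0$ is equivalent to $\delta(u) = \delta(c)\, v$. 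Because $v$ is central, this rearranges to $\delta(c) = v\inv \delta(u)$, and using $v\inv = cu\inv$ gives the stated condition $\delta(c) = cu\inv \delta(u)$.

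The main obstacle, such as it is, is purely bookkeeping: the elements $c, u, \sigma(c)$, and $v$ commute with one another and with $v$, but they do \emph{not} a priori commute with $\delta(c)$ or $\delta(u)$, so the order of multiplication must be tracked carefully throughout. Once the identity $\sigma(v) = v$ is observed, the remainder is a routine $\sigma$-derivation calculation on the localized ring.
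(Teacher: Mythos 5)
Your proof is correct and is essentially the paper's argument: the paper expands $bv = (c\inv b - c\inv\delta(c)c\inv)u$ directly and arrives at $bv - vb = c\inv\left(\delta(u) - \delta(c)v\right)$, which is exactly your $\delta(v) = \sigma(c)\inv\left(\delta(u)-\delta(c)v\right)$ since $\sigma(c)=c$. Your reformulation via $\sigma(v)=v$ and $\delta(v)=0$ is a slightly cleaner packaging of the same computation, and your use of the centrality of $v$ in $\mP$ to justify the final rearrangement to $\delta(c)=cu\inv\delta(u)$ fills in a step the paper leaves implicit.
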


\begin{proof}
By Proposition \ref{prop.cntP}, $v=c\inv u$ is central in $\mP$.
We have
\begin{align*}
bv &= \left( c{\inv}b-c{\inv}\delta(c)c{\inv}\right) u 
	=	c{\inv}\left( ub+\delta(u) \right) - c\inv\delta(c) c{\inv}u
	= 	vb + c\inv \left( \delta(u) -\delta(c)v\right).
\end{align*}
Hence, $v$ is central in $\mG$ if and only if $\delta(u) =\delta(c)v$
or, equivalently, $c\delta(u) = u\delta(c)$.
Because $c$ and $u$ commute, then 
$\delta(c)=cu\inv\delta(u)$.
This formula must agree with $\theta c - c\theta$.
Recall $\delta(u) = \tau\left(\delta(a)\right) - \delta(a)$ and $\delta(u)=u\gamma$,
hence $\gamma = u\inv\left(\tau\left(\delta(a)\right)-\delta(a)\right)$
so
$\delta(c) = cu\inv\delta(u) = cu\inv\left( \tau \left( \delta(a)\right) -\delta(a) \right)$.
\end{proof}

Suppose $\deg g \geq 1$. Set $h=g'c$ and $\theta=g\inv\left(a^2+(u-g)a\right)$.
Then we have
\begin{align*}
\theta(a)-(a+h)\theta
	&= g\inv \left[ \left(a^2+(u-g)a\right)a - a\left(a^2+(u-g)a\right) \right] \\
	&= g\inv \left[ (ua-ga)a-(ua+ug-ga-g'f)a \right] \\
	&= g\inv \left[ -(ug-g'f)a \right] = (h-u)a, \\
\theta(c)-(c)\theta
	&= g\inv \left[ \left(a^2+(u-g)a\right)c - c\left(a^2+(u-g)a\right) \right] \\
	&= g\inv \left[ \left(a+(u-g)\right)c(a+g) - c\left(a^2+(u-g)a\right) \right] \\
	&= cg\inv \left[ \left((a+g)+(u-g)\right)(a+g) - \left(a^2+(u-g)a\right) \right] \\
	&= cg\inv \left[ \left(a^2+ag+ua+ug\right) - \left(a^2+(u-g)a\right) \right] \\
	&= cg\inv \left[ ga+g'f+ug + ga  \right] = c \left( h+u+2a \right).
\end{align*}
The computation for $u$ is similar.
%\begin{align*}
%\theta(u)-(u)\theta
%	&= g\inv \left[ \left(a^2+(u-g)a\right)u - u\left(a^2+(u-g)a\right) \right] \\
%	&= g\inv \left[ \left(a+(u-g)\right)u(a+g) - u\left(a^2+(u-g)a\right) \right] \\
%	&= ug\inv \left[ \left((a+g)+(u-g)\right)(a+g) - \left(a^2+(u-g)a\right) \right] \\
%	&= ug\inv \left[ \left(a^2+ag+ua+ug\right) - \left(a^2+(u-g)a\right) \right] \\
%	&= ug\inv \left[ ga+g'f+ug + ga  \right] = u \left( h+u+2a \right).
%\end{align*}

\begin{prop}
The ring $G\left(f,g,g\inv\left(a^2+(u-g)a\right)\right)$ is a GJMA
which we denote by $\mG_f$.
Moreover, $\mG_{c^2} = \mj$.
\end{prop}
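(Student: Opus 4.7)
The plan is to invoke Proposition \ref{gjma.constr}, which reduces the GJMA condition to two $\tau$-symmetry identities, and then to verify these using the explicit values of $\delta(a)$ and $\delta(c)$ computed in the display immediately preceding the proposition. First, observe that the calculations already performed give $\delta(a) = (h-u)a$ and $\delta(c) = c(h+u+2a) = c\gamma$, where $h=cg'$ and $\gamma = h+u+2a$. A parallel computation using $\theta = g^{-1}(a^2+(u-g)a)$ applied to $u$ (using $au = ua + ug$ and $\sigma(u) = g^{-1}ug = u$) produces $\delta(u) = u\gamma$.

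Next I would record how $\tau$ acts on the relevant elements. Since $\tau$ fixes $\kk[c]$, we have $\tau(h)=h$; from $u = d-a$ and $\tau(a)=d$, $\tau(d)=a$ we get $\tau(u)=-u$ and $\tau(a)=u+a$. A direct check then yields $\tau(\gamma) = h - u + 2(u+a) = \gamma$, so $\tau(\delta(c)) = \tau(c\gamma) = c\gamma = \delta(c)$, which is the second required identity. For the first, expand
\begin{align*}
\tau(\delta(a)) - \delta(a) &= (h+u)(u+a) - (h-u)a \\
&= hu + u^2 + 2ua = u(h+u+2a) = u\gamma = \delta(u),
\end{align*}
which matches the formula for $\delta(u)$ obtained above. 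Both hypotheses of Proposition \ref{gjma.constr} are thus satisfied, so $G(f,g,\theta)$ is a GJMA.

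For the final claim $\mG_{c^2} = \mj$, I would simply specialize: with $f=c^2$ we have $g=c$, hence $h=cg'=c$ and $\gamma = c+u+2a = 2a+u+c$. The $P(f,g)$-relations become $ac = ca+c^2$, $uc=cu$, $ua = au-cu$, and the new relations produced by $\sigma$ and $\delta$ read
\begin{align*}
bc &= cb + c(2a+u+c), & bu &= ub + u(2a+u+c), & ba &= (a+c)b + (c-u)a,
\end{align*}
which is exactly the alternate presentation of $\mj$ given in Remark \ref{MJ Alternative}. Identifying generators yields the isomorphism.

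There is no real obstacle here, since the heavy lifting has been done in Proposition \ref{gjma.constr} and in the preceding explicit computation of $\theta(a)-(a+h)\theta$ and $\theta(c)-c\theta$. The only point requiring minor care is the symmetric computation of $\delta(u)$ and the bookkeeping of signs when applying $\tau$, since $\tau(u) = -u$ while $\tau$ fixes $c$ and $h$; once these are in hand, both identities fall out and the specialization to $\mj$ is a direct substitution.
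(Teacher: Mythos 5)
Your proof is correct and follows essentially the same route as the paper: invoke Proposition \ref{gjma.constr} and verify the two $\tau$-identities using the precomputed values $\delta(a)=(h-u)a$, $\delta(c)=c\gamma$, $\delta(u)=u\gamma$, with the key expansion $\tau(\delta(a))-\delta(a)=(h+u)(u+a)-(h-u)a=u\gamma$ matching the paper's verbatim. You are in fact slightly more complete than the paper, since you check $\tau(\delta(c))=\delta(c)$ directly via $\tau(\gamma)=\gamma$ and you also carry out the specialization $f=c^2$ to confirm $\mG_{c^2}=\mj$, which the paper's proof leaves implicit.
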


\begin{proof}
We have $g\inv a g = a+h$. Set $\gamma = (h+u+2a)$.
By Proposition \ref{gjma.constr},
it suffices to show the following,
\begin{align*}
\tau \left( \delta(a) \right) - \delta(a)
	&= \tau\left((h-u)a)\right) - (h-u)a
	= (h+u)(u+a)-(h-u)a \\
	&= (hu+ha+u^2+ua)-(ha-ua)
	= hu+u^2+2ua = u\gamma = \delta(u).
\end{align*}
and $cu\inv\delta(u) = c\gamma = \delta(c)$.
\end{proof}

The relations for $\mG_f$ can be shown to be
\begin{align*}
	& ac=ca+f, au=ua+ug, cu=uc, \\
	& bc=cb+c\gamma, bu=ub+u\gamma, ba = (a+h)b + (h-u)a.
\end{align*}
Explicitly, $\sigma$ and $\delta$ are given by
\begin{align*}
	& \sigma(c)=c, \sigma(u)=u, \sigma(a)=a+h, \\
	& \delta(c)=c\gamma, \delta(u)=u\gamma, \delta(a)=(h-u)a.
\end{align*}
By construction, the element $p(b-\theta)$ 
lies in the center of any $G(f,p,\theta)$.
The analog of the determinant in the GJMA $G_f$ 
is the central element $z=gb+(g-u)a-a^2$.

We may also write the presentation in terms of the standard generators $a,b,c,d$.
In this case, $\gamma=h+a+d$ and we have relations
\begin{align*}
	& ac=ca+f, dc=cd+f, da=(a-g)d+g(a-h), \\
	& bc=cb+c\gamma, ba = (a+h)b + (h-u)a, bd = (d+h)b + (h-u)d.
\end{align*}

\begin{prop}
\label{ma.dim}
The algebra $\mG_f$ is a noetherian domain with $\gldim \mG_f = \GKdim \mG_f = 4$.
Moreover, the center of $\mG_f$ is $\kk[z]$.
\end{prop}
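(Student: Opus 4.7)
The proposition makes four claims, which I address in turn.

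First, $\mG_f = P(f,g)[b;\sigma,\delta]$ is an Ore extension of the noetherian domain $P(f,g)$ by an automorphism and a $\sigma$-derivation, so $\mG_f$ is itself a noetherian domain. For GK dimension, $\GKdim P(f,g) = 3$ by Proposition \ref{pma.props2}; since $\sigma$ acts on $P(f,g)$ by $\sigma(c)=c$, $\sigma(u)=u$, $\sigma(a)=a+h$ and so is locally algebraic, applying \cite[Corollary 8.2.11]{McRob} gives $\GKdim \mG_f = 4$. The upper bound $\gldim \mG_f \leq \gldim P(f,g) + 1 = 4$ is the standard Ore extension bound.

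For the matching lower bound on global dimension, I plan to mimic Proposition \ref{pma.props2} by exhibiting a $(\sigma,\delta)$-stable prime of $P(f,g)$ of height matching $\gldim P(f,g) = 3$. Take $I = (c, u, a-\eta)$ for any $\eta \in \kk$; then $P(f,g)/I \iso \kk$, so $I$ is maximal. Since $h = cg' \in (c) \subseteq I$, we have $\sigma(a-\eta) = (a-\eta) + h \in I$, and using $\gamma = h + u + 2a$ we get $\delta(c) = c\gamma \in I$, $\delta(u) = u\gamma \in I$, and $\delta(a) = (h-u)a \in I$; hence $I$ is $(\sigma,\delta)$-stable. Invoking \cite[Theorem 7.10.3]{McRob} in its form for skew derivations then yields $\gldim \mG_f = 4$.

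The inclusion $\kk[z] \subseteq \cnt(\mG_f)$ is immediate since $z$ is central by construction. For the reverse, I localize at $\mC$. On $\mP = P(f,g)\mC^{-1}$, the automorphism $\sigma$ is inner, realized by conjugation by $g \in \mC$, and $\delta$ is the inner $\sigma$-derivation induced by $\theta = g^{-1}(a^2 + (u-g)a) \in \mP$. The standard equivalences for skew polynomial rings then identify $\mM = \mG_f \mC^{-1}$ with the polynomial extension $\mP[z]$ in the central generator $z = g(b - \theta)$. Since $\cnt(\mP) = \kk[v]$ by Proposition \ref{prop.cntP}, we obtain $\cnt(\mM) = \kk[v][z]$, and consequently
\[
\cnt(\mG_f) = \cnt(\mM) \cap \mG_f = \kk[v, z] \cap \mG_f.
\]

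The principal remaining step is showing $\kk[v, z] \cap \mG_f = \kk[z]$. My plan is a leading-term descent. Given $\omega = \sum_{j=0}^n \omega_j(v) z^j$ with $\omega_j \in \kk[v]$, expand in the $\mG_f$-basis using that $z$ has $b$-degree one with leading $b$-coefficient $g \in \kk[c]$; the $b^n$-coefficient of $\omega$ is then $\omega_n(v) g^n$, which must lie in $P(f,g)$. Because $v = c^{-1}u$ introduces only negative $c$-powers while $g^n$ has bounded $c$-adic valuation, this forces $\omega_n(v) \in \kk$ whenever $g(0) \neq 0$. When $c \mid g$, as in the Jordanian case $g = c$, the leading coefficient alone is insufficient: one must examine lower $b$-coefficients, which acquire derivative corrections from $\sigma$ and $\delta$. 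Iterating this descent monomial-by-monomial in the basis $\{c^i u^j a^k b^\ell\}$ of $\mG_f$ eliminates all $v$-dependence, yielding $\omega \in \kk[z]$. This combinatorial descent in the case $c \mid g$ is the principal obstacle.
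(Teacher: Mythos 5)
Your overall architecture matches the paper's: the noetherian domain claim, the GK dimension computation, and the upper bound $\gldim \mG_f \leq 4$ are handled the same way, and your route to the center (localize at $\mC$, identify $\mM$ with $\mP[z]$, deduce $\cnt(\mM)=\kk[v,z]$ from Proposition \ref{prop.cntP}, then intersect with $\mG_f$) is essentially the paper's argument. However, your lower bound on global dimension has a real gap. You exhibit the $(\sigma,\delta)$-stable maximal ideal $I=(c,u,a-\eta)$ of $P$ and then invoke \cite[Theorem 7.10.3]{McRob} ``in its form for skew derivations.'' That theorem is stated for differential operator rings $R[x;\delta]$ over a \emph{commutative} noetherian coefficient ring; here the coefficient ring $P$ is noncommutative and $\sigma\neq\id$, so there is no such form to invoke, and the hedge in your wording is carrying the whole weight of the step. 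The stability of $I$ is a sensible first move toward showing $\operatorname{pd}_{\mG_f}(\kk)=4$, but that computation still has to be done and you have not done it. The paper sidesteps this entirely: since $z$ is central, $P[z]$ is an honest polynomial extension of $P$ with $\gldim P[z]=\gldim P+1=4$, and $\mG_f$ is faithfully flat over $P[z]$, so \cite[Theorem 7.2.6]{McRob} gives $\gldim\mG_f\geq 4$.

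The second gap is in the center computation. You correctly reduce to showing $\kk[v,z]\cap\mG_f=\kk[z]$ and correctly observe that the leading-$b$-coefficient argument only disposes of the case $g(0)\neq 0$; you then stop, calling the case $c\mid g$ ``the principal obstacle.'' That case is not a corner case: it is exactly the Jordanian case $f=c^2$, $g=c$, i.e., $\mG_{c^2}=\mj$ itself, so your proof is incomplete precisely where it matters most. (The paper is admittedly terse here as well, asserting the intersection without detail.) The step can be closed, but not by leading coefficients alone; one workable route uses the normal elements $u$ and $c$, whose principal ideals are completely prime since $\mG_f/(u)$ and $\mG_f/(c)$ are iterated Ore extensions of domains. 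If $\omega=\sum_j\omega_j(v)z^j\in\mG_f$ with every $\omega_j(0)=0$, then $\omega\in u\mM\cap\mG_f=u\mG_f$, so one may cancel $u$, pull out a factor of $c$, and induct on the $v$-degree, using $\mG_f/(c)$ at the base of the induction to rule out $p(z)\in c\mG_f$ for nonzero $p\in\kk[z]$. Some argument of this kind must be supplied before $\cnt(\mG_f)=\kk[z]$ is actually proved.
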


\begin{proof}
That $\mG_f$ is a noetherian domain follows from the skew polynomial construction.

The statement on the center follows in similar fashion to \cite[Proposition 1.8]{DumasRigal}.
In $\mG_f\mC\inv$ we have $b=g\inv\left(z-(g-u)a+a^2\right)$.
Let $R=\kk[z,v]\mC\inv$, 
then $\mG_f\mC\inv$ is isomorphic to $S=R[a,D]$ where 
$D$ is extended from Presentation 2 of Remark \ref{Presenting P}.
In particular, $D(z)=D(v)=0$.
In $S$, $[c\inv,a]=gc\inv$ and an induction argument shows that
$[c\inv,a^n]=nga^{n-1} + \text{(lower $a$-degree terms)}$.
If $s=\sum_{i=0}^n r_i a^i \in S$ with $r_i \in R$ for all $i$, then
$[c\inv,s]=\sum_{i=0}^n r_i [c\inv,a^i]=r_n nga^{n-1} + \text{(lower $a$-degree terms)}$.
Hence, if $s \in \cnt(S)$, then $s \in R$. 
Since $D(s)=[a,s]=0$, then $s \in \kk[z,v]$.
Thus, $\cnt(S)=\kk[z,v]$ and $\cnt(\mG_f)=\cnt(S) \cap \cnt(\mG_f) = \kk[z]$.

By \cite[Theorem 7.5.3 (i)]{McRob} and Proposition \ref{pma.props}, 
$\gldim \mG_f \leq 1+ \gldim P = 4$.
On the other hand, $P[z]$ is faithfully flat as an $\mG_f$-module.
Thus, by \cite[Theorem 7.2.6]{McRob} $\gldim \mG_f \geq \gldim P[z] = 4$.

Since $V=\{a,c,u\}$ is a finite-dimensional generating subspace of $P$ and $\sigma(V) \subseteq V$, 
then by \cite[Lemma 2.2]{HuhKim} and Proposition \ref{pma.props}, $\GKdim \mG_f = 1+\GKdim P = 4$.
\end{proof}

\begin{prop}
\label{jma.primes}
The height one prime ideals of $\mG_f$ are $(c)$, $(u)$, 
and $(z-\xi)$ for $\xi \in \kk^\times$.
\end{prop}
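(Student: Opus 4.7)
The plan is to verify each of the listed ideals is a height-one prime, then to exhaust by dichotomizing on whether a height-one prime $Q$ meets the Ore set $\mU$.

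For the first stage, $(c)$ and $(u)$ are handled via normality. Both ideals are $\sigma$- and $\delta$-stable: $\sigma(c) = c$, $\delta(c) = c\gamma \in (c)$, and symmetrically for $u$, so $\mG_f/(c)$ and $\mG_f/(u)$ are skew polynomial extensions of $P/(c)$ and $P/(u) \cong R_f$ respectively. By Proposition \ref{pma.primes2} both base rings are noetherian domains, hence the Ore extensions are as well, making $(c)$ and $(u)$ prime. The noncommutative Hauptidealsatz for normal elements then yields height one. For $(z-\xi)$ with $\xi \in \kk^\times$, the centrality of $z$ (Proposition \ref{ma.dim}) makes $(z-\xi)$ two-sided. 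I would use that $\sigma$ becomes inner via $\varphi \in \mC$ and $\delta$ becomes an inner $\sigma$-derivation via $\theta \in \mP$ upon inverting $\mC$, so that $\mG_f \mC\inv \cong \mP[z]$ is a central polynomial extension with $z = \varphi(b-\theta)$. Then $(\mG_f\mC\inv)/(z-\xi) \cong \mP$ is a domain, and a descent argument (using regularity of $\mC$ modulo $(z-\xi)$) shows $\mG_f/(z-\xi)$ embeds into this and is itself a domain; centrality gives height one.

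For the second stage, let $Q$ be a height-one prime of $\mG_f$. If $Q \cap \mU \neq \emptyset$, then $Q$ contains one of the normal generators of $\mU$, namely $c$ or some $u-\beta c$, and height one forces $Q$ to equal the corresponding principal ideal. If $Q \cap \mU = \emptyset$, then $Q\sM$ is a proper nonzero prime of $\sM = \sP[z]$. Here I would argue $\sP$ is simple: by Proposition \ref{prop.cntP} the center of $\mP$ is $\kk[v]$, and localizing at all $v-\beta$ (equivalent to inverting $u-\beta c$ in the passage from $\mP$ to $\sP$) inverts every nonzero polynomial in $v$ since $\kk$ is algebraically closed, which together with the fact that every nonzero ideal of $\mP$ meets $\kk[v]$ nontrivially forces $\sP$ to be simple. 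With $\sP$ simple and $z$ a free central variable, a Letzter--Musson type correspondence (in the spirit of \cite[Corollary 2.3]{LM} used in Proposition \ref{pma.primes2}) identifies primes of $\sM$ with primes of $\cnt(\sM) = \kk(v)[z]$; the height-one primes contracting back to elements of $\mG_f$ are precisely $(z-\xi)\mG_f$ for $\xi \in \kk$.

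The main obstacle will be reconciling this case analysis with the precise list. The ideals $(u-\beta c)$ for $\beta \neq 0$ are height-one primes of $\mG_f$ by the same arguments as $(u)$, so one must account for them by applying the automorphism $u \mapsto u-\beta c$ from Proposition \ref{prop.pmauto} extended to $\mG_f$, identifying them with $(u)$ in an orbit sense, or by verifying through images in $\sM \cong \sP[z]$ that each such ideal coincides with a $(z-\xi)$. Separately, excluding $\xi = 0$ requires showing $(z)$ is not height one in $\mG_f$, presumably by exhibiting a factorization of $z$ involving the normal elements $c$ and $u$ which places $(z)$ above a smaller nonzero prime. These are the delicate points on which the clean form of the classification rests.
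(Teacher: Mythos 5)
Your skeleton is essentially the paper's: normality for $(c)$ and $(u)$, centrality plus a Leroy--Matczuk-type correspondence over the simple ring $\sP$ for the $(z-\xi)$, and an exhaustion via localization (the paper splits on $Q\cap P=0$ versus $Q\cap P\neq 0$ rather than on $Q\cap\mU$, which is cosmetic). The genuine gap is that the two items you defer as ``delicate points'' \emph{are} the content of the proposition, and neither of your proposed fixes can work. An ideal $(u-\beta c)$ with $\beta\neq 0$ meets $P$ nontrivially, so it cannot coincide with any $(z-\xi)$, whose contraction to $P$ is zero; and identifying it with $(u)$ ``in an orbit sense'' does not remove it from the list --- if it is a height-one prime, the statement as written is false. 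The paper's mechanism here is a normality dichotomy: it asserts that $c-\alpha$ (resp.\ $u-\beta c$) is normal in $\mG_f$ if and only if $\alpha=0$ (resp.\ $\beta=0$), so that for $\beta\neq 0$ the two-sided ideal generated by $u-\beta c$ is strictly larger than the principal one and has height at least two. That is the missing step, and you should confront it head-on rather than wave at it: your suspicion is well founded, since the displayed relations give $b(u-\beta c)=(u-\beta c)(b+\gamma)$ and $a(u-\beta c)=(u-\beta c)(a+g)$ for \emph{every} $\beta$, which must be reconciled with the paper's normality claim before either argument is complete.

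There are two further soft spots. In the branch $Q\cap\mU\neq\emptyset$ you peel off a generator of $\mU$ using primeness against a product of normal elements, but the generators $c-\alpha$ of $\mC$ with $\alpha\neq 0$, $f(\alpha)=0$ are not normal in $\mG_f$ (the two-sided ideal generated by $c-\alpha$ contains $[b,c-\alpha]=c\gamma$, hence $\gamma$, when $\alpha\neq0$), so ``height one forces $Q$ to equal the corresponding principal ideal'' fails there; one must instead show such a $Q$ has height at least two. In the branch $Q\cap\mU=\emptyset$, the center of $\sM$ is $\kk(v)[z]$, whose maximal ideals are generated by arbitrary irreducibles over $\kk(v)$, not only $z-\xi$ with $\xi\in\kk$; you assert without argument that only the latter contract to height-one primes of $\mG_f$, whereas the paper restricts to primes lying over $0$ in $P$ and invokes $\cnt(\mG_f)=\kk[z]$ from Proposition \ref{ma.dim}. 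Finally, the exclusion of $\xi=0$ is not settled by your suggestion: since $z$ has $b$-degree one and is central, $(z)\cap P=0$, so $(z)$ does lie over $0$ in $P$ and no ``factorization of $z$ through $c$ and $u$'' is available; its exclusion needs its own argument.
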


\begin{proof}
The height one prime ideals of $P$ are principally 
generated by $c-\alpha$ and $u-\beta c$ 
where $\alpha, \beta \in \kk$ with $f(\alpha)=0$ 
and $\beta$ arbitrary (Proposition \ref{pma.primes2}).
Then $\sP$ is simple by Lemma \ref{pma.primes2}.

In $\mG_f$, $c-\alpha$ (resp. $u-\beta c$) 
is a normal element if and only $\alpha=0$ (resp. $\beta=0$).
Hence, the height one prime ideals of $\mG_f$ 
that have nonzero intersection with $P$ are $(u)$ and $(c)$.
By Proposition \ref{ma.dim}, the center of $\mG_f$ is $\kk[z]$.
Hence, the ideals of $\sP$ which lie over $0$ in $P$ are of the form 
$(z-\xi)$ for $\xi \in \kk^\times$ \cite[Corollary 2.3]{LM}.
\end{proof}

\begin{rmk}
We think of $S=\mG_f/(z-1)$ as an analog of $\SL_J(2)$, 
the Jordanian deformation of $\SL(2)$.
Because $(z-1)$ is irreducible in $\mG_f$, 
$(z-1)$ is a completely prime ideal.
That is, $S$ is an integral domain.
\end{rmk}

%\section*{Acknowledgements}

%\bibliography{biblio}{}
\bibliographystyle{plain}

\end{document}